\newtheorem{theorem}{Theorem}[section]
\newtheorem{lemma}[theorem]{Lemma}
\newtheorem{lemma-def}[theorem]{Lemma-Definition}
\newtheorem{proposition}[theorem]{Proposition}
\newtheorem{corol}[theorem]{Corollary}
\newtheorem{definition}[theorem]{Definition}
\newcommand{\Q}{\mathbb{Q}}
\newcommand{\F}{\mathbb{F}}
\newcommand{\FF}{\mathbb{F}}
\newcommand{\Z}{\mathbb{Z}}
\newcommand{\ZZ}{\Z}
\newcommand{\N}{\mathbb{N}}
\newcommand{\K}{\mathbb{K}}
\newcommand{\PP}{\mathbb{P}}
\newcommand{\Lm}{\mathcal{L}}
\newcommand{\cM}{\mathcal{M}}
\newcommand{\cO}{\mathcal{O}}
\newcommand{\ord}{\mathrm{ord}\,}
\newcommand{\Lring}{\Lm_{\text{ring}}}
\newcommand{\Ldist}{\Lm_{\text{dist}}}
\newcommand{\Deltadist}{\Delta_{\text{dist}}}
\newcommand{\LmM}{\Lm_{M}}
\newcommand{\PlusR}{\boxplus_R}
\newcommand{\MinR}{\boxminus_R}
\newcommand{\Dvier}{D^{(4)}}
\newcommand{\Dvierk}{\Dvier_k}
\newcommand{\ac}{\text{ac}\,}
\newcommand{\mathor}{\quad \text{or}\quad}
\newenvironment{proof}{\noindent\textit{Proof.}} {\hfill{$\square$}\newline}
\DeclareMathOperator{\sq}{\square}
\title{Cell decomposition and definable functions for weak $p$-adic structures}
\author{Eva Leenknegt\\
Purdue University, Department of Mathematics, \\150 N. University Street, West Lafayette, IN 47907-2067, USA\\ 
\texttt{eleenkne@math.purdue.edu}}
\date{}
\begin{document}
\maketitle
\begin{abstract}
We develop a notion of cell decomposition suitable for studying weak $p$-adic structures (reducts of $p$-adic fields where addition and multiplication are not (everywhere) definable).
\end{abstract}
\section{Introduction and first definitions}

Results for real fields have always been a big source of inspiration for the study of $p$-adic fields.  An example of this is the concept of $o$-minimality, see e.g van den Dries \cite{vdd-98}, which inspired Haskell and Macpherson \cite{has-mac-97} to develop a similar concept, $P$-minimality, for $p$-adic fields. A difference between those concepts is that $o$-minimality also covers reducts of real closed fields, see Peterzil \cite{mpp-92, pet-92, pet-93}, while $P$-minimality focuses on expansions of the language of valued fields.
To fill this gap in the study of $p$-adic fields, we need to consider reducts $(K, \Lm)$  of $(K, \Lring)$, where $K$ is a $p$-adically closed field, and  the $\Lm$-definable subsets of $K$ are exactly the $\Lring$-definable (semi-algebraic) subsets of $K$. 

A first step towards understanding such structures is to describe the boundaries of our `playing field': identify the relations and functions that, as a bare minimum, would have to be definable in such a structure.
In our paper \cite{clu-lee-2011}, we concluded that any reduct of $(K, \Lring)$ where the relations \[R_{n,m}(x,y,z) \leftrightarrow y-x \in z Q_{n,m}\] are definable, would fit inside this framework. The sets $Q_{n,m}$, which for $K = \Q_p$ can be defined as $\cup_{k\in \Z} p^{kn}(1 + p^m \Z_p)$, are a variation on the sets of $n$-th powers $P_n$ that one encounters in the study of $p$-adic semi-algebraic sets. A more general definition will be given in section \ref{subsec:defqnm}.

Now that we know which structures we want to consider, the second step will be to describe their definable sets. 
Historically, cell decomposition has proved to be a very useful tool in studying definability questions. Examples include $o$-minimal cell decomposition in the real case, and Denef's cell decomposition for $p$-adic semi-algebraic sets, which can be stated as follows:
\begin{theorem}[Denef, \cite{denef-84, denef-86}]
Let $K$ be a finite field extension of $\Q_p$. Any semi-algebraic set $X\subseteq K^{k+1}$  can be partitioned as
a finite union of cells of the form
\[\{(x,t) \in D \times K \ | \ \ord a_1(x) \ \square_1 \ \ord (t-c(x)) \ \square_2 \ \ord a_2(x), t-c(x) \in \lambda P_{n}\},\]
where $D$ is a semi-algebraic subset of $K^k$ and $c(x), a_i(x)$ are semi-algebraic functions.
\end{theorem}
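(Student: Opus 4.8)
The plan is to deduce cell decomposition from Macintyre's quantifier-elimination theorem for $p$-adically closed fields, by first reducing to a quantifier-free normal form and then proving a polynomial-preparation statement by induction on the degree in $t$.

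\textbf{Reduction to a normal form.} By Macintyre's theorem, $X\subseteq K^{k+1}$ is a finite Boolean combination of sets $\{(x,t): f(x,t)\in \lambda P_n\}$ and $\{(x,t): \ord f(x,t)\le \ord g(x,t)\}$ with $f,g\in K[x][t]$, $\lambda\in K^{\times}$ and $n\in\Nz$. The collection of finite unions of cells is closed under intersection and complement (one subdivides a cell according to extra order and power conditions, or passes to its complement, and still obtains finitely many cells) and under common refinement; so it suffices to produce one finite partition of $K^{k+1}$ into cells that is \emph{compatible} with the finitely many polynomials $f_1,\dots,f_r$ appearing, meaning that on each cell every $\ord f_i(x,t)$ and every $P_n$-class of $f_i(x,t)$ (for the finitely many $n$ involved) is determined by the cell parameters.

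\textbf{The preparation lemma.} The core is: given $f_1,\dots,f_r\in K[x][t]$ of $t$-degree at most $d$, there is a finite semi-algebraic partition $\{D_j\}$ of $K^k$, and over each $D_j$ a semi-algebraic centre $c_j$ and a cell decomposition of $D_j\times K$ with that centre, on each piece $A$ of which $f_i(x,t)=\theta_i(x)\,(t-c_j(x))^{\mu_i}\,\varepsilon_i(x,t)$ with $\theta_i$ semi-algebraic, $\mu_i\in\N$, and $\varepsilon_i$ a unit with $\ac\varepsilon_i$ constant on $A$; all the order and power conditions then follow, using arithmetic in $\Z$ and the fact that for a unit $u$ the condition $u\in\lambda P_n$ depends only on $\ac u$. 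This is proved by induction on $d$, with $d=0$ immediate. For the step one analyses a single $f=f_i$ via Hensel's lemma — applicable since a $p$-adically closed field is Henselian — asking whether $f(x,\cdot)$ has a root in $K$ in a prescribed residue disc: where it does, Hensel yields a semi-algebraic $c(x)$ equal to that root and $f(x,t)/(t-c(x))$ has lower degree, so the induction hypothesis applies (one also prepares $\partial f/\partial t$ and the discriminant of $f$, which control where roots of $f$ cluster, equivalently the Newton polygon of $f$ at $c(x)$); where $f(x,\cdot)$ has no such root, $\ord f(x,t)$ is already an affine function of $\ord(t-c(x))$ for a suitable $c$, and that piece is already in the required shape, possibly after splitting the residue disc $\{\ord(t-c)=0\}$ into its (finitely many) residue classes to make $\ac f$ constant.

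\textbf{Main obstacle.} The real difficulty is amalgamation. Different $f_i$, and different stages of the recursion for one $f_i$, naturally want different centres; these must be merged into a single $c_j$ over each base piece while keeping the partition finite, which forces repeated subdivision of $K^k$ according to the relative valuations $\ord(c(x)-c'(x))$ of competing centres (and the residues of $(t-c)/(c-c')$), and one must verify the recursion terminates — the degree bound does not increase under these subdivisions. The remaining bookkeeping is routine: after factoring out $\theta_i(x)(t-c(x))^{\mu_i}$, the condition $f_i\in\lambda P_n$ becomes a condition $t-c(x)\in\lambda' P_{n'}$ on the single variable $t-c(x)$ (replacing $n$ by a multiple and subdividing once more if needed), and together with the order conditions on $\ord(t-c(x))$ this is a finite intersection of cell conditions; subdividing $D_j$ a final time to collapse two order conditions into one pair $\ord a_1(x)\,\square_1\,\ord(t-c(x))\,\square_2\,\ord a_2(x)$ puts each piece into the stated form.
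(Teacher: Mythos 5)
This theorem appears in the paper only as quoted background from Denef \cite{denef-84, denef-86}; the paper gives no proof of it, so there is nothing in-paper to compare your argument against. What you have written is a faithful reconstruction of Denef's original route in \cite{denef-84}: invoke Macintyre's quantifier elimination \cite{mac-76} to reduce to finitely many polynomial conditions, then prepare the polynomials by induction on the degree in $t$, using Hensel's lemma to extract semi-algebraic centres and writing each $f_i$ on a piece as $\theta_i(x)(t-c(x))^{\mu_i}$ times a unit whose contribution to order and $P_n$-membership is constant. Two remarks. First, Denef's second proof \cite{denef-86} deliberately reverses your logical order: there cell decomposition is established \emph{without} assuming Macintyre's theorem, precisely so that quantifier elimination can be deduced from the decomposition (this is the strategy the present paper imitates in Lemma \ref{cd-lemma:qe} and Proposition \ref{prop:ldistqe}); since your goal is only the decomposition, assuming Macintyre is legitimate, but it makes the result unusable as a route to quantifier elimination. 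Second, your sketch correctly names the two genuinely hard steps --- the preparation lemma for a single polynomial near a cluster of roots (including roots in the algebraic closure, handled via the discriminant and Newton polygon) and the amalgamation of competing centres with a termination argument --- but it states them as obstacles rather than resolving them. In particular, making the unit factor have angular component constant on a piece requires fixing the class of $(t-c)/(c-c')$ modulo a sufficiently fine subgroup (the phenomenon isolated in Lemma \ref{lemma:rhoab} of this paper), and the finiteness of the recursion is exactly where the degree bookkeeping must be carried out. So: correct architecture, consistent with Denef's own proof, but the substance of that proof is deferred rather than supplied.
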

Among other applications, Denef used this result to give a new proof of Macintyre's quantifier elimination result \cite{mac-76}. The result was also important for Mourgues \cite{mou-09} result on $P$-minimal cell decomposition.

 In \cite{clu-lee-2011}, we showed that the relation $D^{(3)}(x,y,z) \leftrightarrow \ord (x-y) < \ord (z-y)$ is definable  in the language $\LmM:= (\{R_{n,m}\}_{n,m})$. 
 Because of this, cells  $C \subset K$ are $\LmM$-definable, which is what we wanted.
However, to get a language that is more convenient to work with, we will replace $D^{(3)}$  by the slightly stronger relation $D^{(4)}$, defined as \[D^{(4)}(x,y,z,t) \leftrightarrow \ord (x-y) < \ord (z-t).\] The resulting language $\Ldist = (D^{(4)}, \{R_{n,m}\}_{n,m})$ is strictly stronger than $\LmM$, as there are sets which are $\Ldist$-definable, but not $\LmM$-definable \cite{PhD}. 
\\\\
Our definition of cells is inspired by Denef's $p$-adic cells, but with some modifications, the first being that we use the sets $Q_{n,m}$ instead of the usual $P_n$.
A second difference is that we will only require  the relation $\ord a_i(x) <\ord t$ to be definable, and not necessarily the function $a_i(x)$ itself. This change is motivated by the observation that the function $(x,y) \mapsto x-y$ is not necessarily definable in all languages that contain a symbol for the relation $D^{(4)}$. We will call this \emph{weak} cell decomposition as opposed to (strong)  cell decomposition results that only use definable functions. 

While working on the cell decomposition results presented in this paper, we noticed that many of those results were valid for a much wider class of fields than just $p$-adically closed fields. In particular, we do not need to assume that the field is henselian, and our results will work in any characteristic. For this reason, even though our original motivation was the study of $p$-adically closed fields, we present our results for $(\F_q,\Z)$-fields: valued fields with residue field isomorphic to $\F_q$ and value group elementarily equivalent to $\Z$. So this paper is really about cell decomposition techniques for expansions of $(K, \Ldist)$, where $K$ is an $(\F_q, \Z)$-field.
\\\\
Let us now give a brief overview of the contents of this paper. We first explain the ideas behind  the sets $Q_{n,m}$ in section \ref{subsec:defqnm}, and then give a formal definition of our concept of cells in section \ref{sec:cell-def}.
In section \ref{section:celdec}, we state some cell decomposition results valid for all expansions of $\Ldist$, and we show how these results can be used to study the language $\Ldist$ itself.

In our definition of cells, we made a distinction between weak and strong cell decomposition, depending on whether or not the functions used were definable. 
 In section \ref{section:deffun} we investigate the definable functions of structures that admit weak cell decomposition. In particular, we will focus on the existence of definable Skolem functions. The reason for this is a result by Mourgues for $P$-minimal fields, stating that a structure has (strong)  cell decomposition if and only if it admits definable Skolem functions. When restricting our attention to $p$-adically closed fields, and under the  assumption that  multiplication by a sufficient number of scalars is definable, we can obtain a similar result for expansions of $\Ldist$.
 
 However, this result is not as strong as it seems to be. In $P$-minimality, requiring the existence of definable Skolem functions is a relatively minor assumption, as there are no known examples of structures that do not have such functions. For the weaker structures we study, we get a different picture: $(K,\Ldist)$ itself is an example of a structure having no definable Skolem functions. So the really interesting questions are whether or not every expansion of  $(K,\Ldist)$ admits weak cell decomposition, and under which conditions a structures would have definable skolem functions.
 
 At this time, we cannot answer the first question, and we can only give a conjecture for the second question. In $o$-minimality, it is known that any structure where addition is definable, has definable skolem functions. Based on the structures we have studied, our guess would be that an expansion of $\Ldist$ probably has Skolem functions if addition and sufficient scalar multiplication is definable. 
 While we cannot give a proof for this, we have strong indications that structures where these requirements are not satisfied can never have definable Skolem functions.  Section \ref{section:subaffine}  provides an example of a structure where we have full scalar multiplication, and where addition is definable on large open sets, but which still does not admit definable Skolem functions.

 The expansions considered in this paper are still very basic. Adding a symbol for either addition \cite{lee-2011} or (restricted) multiplication \cite{PhD} is also  possible, but for this we refer to \cite{lee-2011} and \cite{PhD}.

\subsection{Definition of the sets $Q_{n,m}$} \label{subsec:defqnm}
The field of $p$-adic numbers admits elimination of quantifiers in the language $\Lm_{\text{Mac}}$, which is the ring language, extended with predicates $P_n$ for the sets of $n$-th powers. 
Note that by Hensel's lemma, any $x \in \Q_p$ can be written, using a sufficiently large $r \in \N$,  as $x = \lambda \cdot u$, with $ u \in (1 + p^r\Z_p) \cap P_n$, so that cosets $\lambda P_n$ encode
certain information concerning the value group and the angular components of their elements. This is essentially the reason why this language admits elimination of quantifiers.

However, this only works because $\Q_p$ and other $p$-adically closed fields are Henselian. In general, additive-multiplicative congruences (amc-structures), as proposed by Basarab and Kuhlmann  \cite{bas-kuh-92, kuh-94} are necessary to obtain quantifier elimination in a definitional expansion of the valued field language. Amc-structures are quotient groups  $K^{\times}/(1+I)$, for ideals $I \subset \mathcal{O}_K$ 
(where $\mathcal{O}_K$ is the valuation ring of $K$). 
For example, if $I = \mathcal{M}_K$, the maximal ideal of $\mathcal{O}_K$, then the corresponding amc-structure encodes information about the value group and the angular component modulo $\pi$. 

We propose to use a variation on amc-structures, that encodes similar information as the sets $P_{n}$, even when Hensel's lemma does not hold. In particular, we will consider the following class of fields.

\begin{definition}
Let $\FF_q$ be the finite field with $q$ elements and $\ZZ$ the ordered abelian group of integers. We define a  $(\FF_q,\ZZ)$-field  to be a valued field with residue field
isomorphic to $\FF_q$ and value group elementarily equivalent to $\ZZ$.
\end{definition}
Fix a $(\FF_q,\ZZ)$-field  $K$, fix an element $\pi_K$ with smallest positive order, and write $\cM_K$ for the maximal ideal of the valuation ring $\cO_K$ of $K$. For each integer $n>0$, let $P_n$ be the set of nonzero $n$-th powers in $K$. For each $m>0$, we will define sets $Q_{n,m}$ using angular component maps. The following lemma shows that such maps exist and that they can be defined in a unique way.
\begin{lemma} For each integer $m>0$, there is a unique group homomorphism
$$
\ac_m:K^\times \to (\cO_K\bmod \pi_K^m)^\times
$$
such that $\ac_m(\pi_K)=1$ and such that $\ac_m(u)\equiv u \bmod (\pi_K)^m$ for any unit $u\in\cO_K$.
\end{lemma}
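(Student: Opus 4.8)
The plan is to give one explicit construction that yields both existence and uniqueness at once. The only real issue is that the value group $\Gamma := v(K^\times)$ is merely elementarily equivalent to $\Z$, so a general $x\in K^\times$ need not be of the form $\pi_K^{v(x)}\cdot(\text{unit})$; the naive recipe for an angular component therefore does not apply directly. To repair this I would work modulo $e$-th powers, where $e:=|(\cO_K\bmod\pi_K^m)^\times|=q^{m-1}(q-1)$ is the order of the target group. The structural input I need is that $\Gamma$, being a $\Z$-group, satisfies $[\Gamma:e\Gamma]=e$ with $\Gamma/e\Gamma$ cyclic, generated by the class of the least positive element $v(\pi_K)$ — a standard first-order consequence of $\Gamma\equiv\Z$. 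Hence for every $\gamma\in\Gamma$ there is a unique $r\in\{0,\dots,e-1\}$ with $\gamma-r\,v(\pi_K)\in e\Gamma$, and then (since $\Gamma$ is torsion-free) a unique $\delta\in\Gamma$ with $\gamma-r\,v(\pi_K)=e\delta$.

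Given this, for $x\in K^\times$ with $\gamma=v(x)$ I would choose any $z\in K^\times$ with $v(z)=\delta$ (possible because $v$ is onto), set $u:=x\,\pi_K^{-r}z^{-e}$, which lies in $\cO_K^\times$ since $v(u)=0$, and define $\ac_m(x):=u\bmod\pi_K^m$. The first point to verify is independence of the choice of $z$: a second choice $z'$ differs from $z$ by a unit of $\cO_K$, so the two candidate values of $u$ differ by an element of $(\cO_K^\times)^e$, whose image in $(\cO_K\bmod\pi_K^m)^\times$ is trivial since that group has order $e$. This makes $\ac_m$ well defined.

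It then remains to check the asserted properties. For $x=\pi_K$ one finds $r=1$ when $e\geq2$ (the case $e=1$ being trivial), $\delta=0$, $u=1$, so $\ac_m(\pi_K)=1$; for a unit $u_0\in\cO_K^\times$ one finds $r=\delta=0$, so $\ac_m(u_0)=u_0\bmod\pi_K^m$. For multiplicativity, given $x,y$ with associated data $(r,\delta,z,u)$ and $(r',\delta',z',u')$, I write $r+r'=qe+r''$ with $q\in\{0,1\}$ and $0\leq r''<e$, and then check that the data attached to $xy$ are $r''$, the element $\delta'':=q\,v(\pi_K)+\delta+\delta'$, the choice $z'':=\pi_K^q zz'$, and $u'':=uu'$; this gives $\ac_m(xy)=\overline{uu'}=\ac_m(x)\ac_m(y)$. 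Finally, uniqueness is a free consequence of the same device: if $\psi\colon K^\times\to(\cO_K\bmod\pi_K^m)^\times$ is any homomorphism with the two stated properties, then writing $x=\pi_K^r z^e u$ as above gives $\psi(x)=\psi(\pi_K)^r\psi(z)^e\psi(u)=\psi(z)^e\cdot(u\bmod\pi_K^m)$, and $\psi(z)^e=1$ because the target has order $e$; hence $\psi(x)=u\bmod\pi_K^m=\ac_m(x)$.

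The substantive obstacle here is conceptual rather than computational: one has to see that the breakdown of the naive definition — which stems solely from $\Gamma$ possibly being a proper $\Z$-group rather than $\Z$ — is completely fixed by passing to $e$-th powers and using that the target group is killed by $e$. Once that idea is in hand, everything else (well-definedness, and the small amount of bookkeeping with the carry $q$ in the multiplicativity check) is routine.
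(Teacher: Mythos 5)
Your proposal is correct and follows essentially the same route as the paper: with $e=N_m:=q^{m-1}(q-1)$, the paper likewise decomposes $K^\times$ as the disjoint union of the cosets $\pi_K^\ell\cdot(P_{N_m}\cdot\cO_K^\times)$, writes $y=\pi_K^\ell x^{N_m}u$, and defines $\ac_m(y)$ as the image of $u$, with uniqueness and well-definedness both resting on the fact that the target group has order $N_m$ and hence kills $N_m$-th powers. Your write-up merely makes explicit two points the paper leaves as ``clear'': why the decomposition exists when $\Gamma$ is only a $\Z$-group, and the small carry computation verifying multiplicativity.
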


\begin{proof}
Put $N_m:=(q-1)q^{m-1}$ and let $U$ be the set $P_{N_m}\cdot \cO_K^\times$.
Note that $K^\times$ equals the finite disjoint union of the sets $\pi_K^\ell\cdot U$ for integers $\ell$ with $0 \leqslant \ell \leqslant N_m-1$.
Hence, any element $y$ of $K^\times$ can be written as a product of the form $\pi_K^\ell x^{N_m} u$, with $u\in \cO_K^\times$, $\ell\in\{0,\ldots,N_{m}-1\}$, and $x\in K^\times$.

Since $\ac_m$ is required to be a group homomorphism to a finite group with $N_m$ elements, it must send $P_{N_m}$ to $1$. Also note that the projection $\cO_K \to \cO_K\bmod \pi_K^m$ (which is a ring homomorphism), induces a natural group homomorphism $p: \cO_K^\times \to (\cO_K\bmod \pi_K^m)^\times$. Now if we write $y=\pi_K^\ell x^{N_m} u$, we see that $\ac_m$ must satisfy
\begin{equation}\label{acm}
\ac_m ( y ) = p(u),
\end{equation}
which implies that the map $\ac_m$ is uniquely determined if it exists. Moreover, we claim that we can use \eqref{acm} to define $\ac_m$. This is certainly a well defined group homomorphism:
if one writes $y=\pi_K^\ell \tilde{ x}^{N_m} \tilde{u}$ for some other $\tilde{u}\in \cO_K^\times$ and $\tilde{x}\in K^\times$, then clearly $p(u)=p(\tilde {u})$. It is also clear that this homomorphism  sends $\pi_K$ to $1$ and  satisfies our requirement that $\ac_m(u)\equiv u \bmod (\pi_K)^m$ for any unit $u\in\cO_K$. 
\end{proof}

\noindent Using these angular component maps, we can define sets $Q_{n,m}$, for any $m,n >0$, as follows:
$$
Q_{n,m} := \{x\in  P_n \cdot (1 + \cM_K^m)\mid \ac_m(x)=1\}.
$$
Note that $Q_{n,m}$ is an open subgroup of finite index of $K^\times$ (for the valuation topology), and that $Q_{n,m}$ is definable in the language of valued fields $(+,-,\cdot,|)$ by the above construction of $\ac_m$. 
For example if $K = \Q_p$, then $Q_{n,m}$ is just the set $\bigcup_{k\in \Z} \ p^{kn}(1+p^m\Z_p)$.

For any element $\lambda \in K$, let $\lambda Q_{n,m}$ denote the set $\{\lambda t\mid t\in Q_{n,m}\}$.
We will sometimes use the alternative notation $\rho_{n,m}(x) = \lambda$ to express that $x \in \lambda Q_{n,m}$.
The relation between $\rho_{n,m}(x+y)$, $\rho_{n,m}(x)$ and $\rho_{n,m}(y)$ is investigated in the lemma below (the proof is left to the reader).

\begin{lemma}\label{lemma:rhoab}
Put $\delta \in \{-1,1\}$. Suppose that $\rho_{n,m}(a) = \lambda$ and $\rho_{n,m}(b) = \mu$, then $\rho_{n,m}(a+\delta b)$ equals
\[ \left\{\begin{array}{lcl}
\hspace{-4pt}\lambda &\text{if}& m + \ord a \leqslant \ord b, \\
\hspace{-4pt} \rho_{n,m}(\lambda +\delta \mu \pi^{rn}), \text{with}  \ rn = \ord (\frac{\mu a}{\lambda b})& \text{if} & -m +\ord b <\ord a <\ord b,\\
\hspace{-4pt} \rho_{n,m}(\lambda + \delta \mu)&\text {if}& \ord a = \ord b = \ord (a + \delta b).
\end{array}\right.\]
\end{lemma}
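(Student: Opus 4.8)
\noindent The plan is to prove the first clause directly from the group structure of $Q_{n,m}$, and then to reduce the other two clauses to it. Throughout, write $a=\lambda a_0$ and $b=\mu b_0$ with $a_0,b_0\in Q_{n,m}$; this is possible precisely because $\rho_{n,m}(a)=\lambda$ and $\rho_{n,m}(b)=\mu$, with $\rho_{n,m}$ read as a fixed section of the projection $K^\times\to K^\times/Q_{n,m}$ for which $\ord\rho_{n,m}(x)$ depends only on $\ord x\bmod n$. I will use, repeatedly, three elementary facts about $Q_{n,m}$: it is a multiplicative group (noted right after its definition); $1+\cM_K^m\subseteq Q_{n,m}$ (for $c\in\cM_K^m$ the unit $1+c$ lies in $1\cdot(1+\cM_K^m)\subseteq P_n\cdot(1+\cM_K^m)$ and satisfies $\ac_m(1+c)=1$); and $\pi_K^{kn}\in Q_{n,m}$ for every integer $k$ (it is an $n$-th power and $\ac_m(\pi_K^{kn})=\ac_m(\pi_K)^{kn}=1$). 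In particular $\ord a_0,\ord b_0\in n\Z$, and every unit of $K$ lying in $Q_{n,m}$ lies in $1+\cM_K^m$.

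\noindent \emph{First clause.} Since $m+\ord a\leqslant\ord b$ gives $\ord(b/a)\geqslant m$, we have $1+\delta b/a\in 1+\cM_K^m\subseteq Q_{n,m}$, and therefore
\[
\frac{a+\delta b}{\lambda}=a_0+\delta\,\frac{b}{\lambda}=a_0\Bigl(1+\delta\,\frac{b}{a}\Bigr)\in Q_{n,m},
\]
so $a+\delta b\in\lambda Q_{n,m}$, that is, $\rho_{n,m}(a+\delta b)=\lambda$. This computation is the engine for the remaining cases: whenever $a+\delta b=w\,(v+\delta w')$ with $w\in Q_{n,m}$, $v,w'\in K$, $v\neq 0$ and $\ord w'\geqslant m+\ord v$, the same argument yields $\rho_{n,m}(a+\delta b)=\rho_{n,m}(v)$.

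\noindent \emph{The other two clauses.} Factor out $a_0$ and straighten the resulting unit of $Q_{n,m}$:
\[
a+\delta b=a_0\Bigl(\lambda+\delta\mu\,\frac{b_0}{a_0}\Bigr)=a_0\Bigl((\lambda+\delta\mu\pi_K^{s})+\delta\mu\pi_K^{s}c'\Bigr),
\]
where $b_0/a_0=\pi_K^{s}(1+c')$ with $s:=\ord b_0-\ord a_0\in n\Z$ and $c'\in\cM_K^m$, and $\pi_K^{s}$ is the power of $\pi_K$ featuring in the middle clause. If $\ord a=\ord b=\ord(a+\delta b)$ (third clause) then $\ord a_0=\ord b_0$, so $s=0$; the hypothesis $\ord(a+\delta b)=\ord a$ forces $\ord(\lambda+\delta\mu)=\ord\mu$ (there is no cancellation), hence $\ord(\delta\mu c')\geqslant\ord\mu+m=\ord(\lambda+\delta\mu)+m$, and the engine gives $\rho_{n,m}(a+\delta b)=\rho_{n,m}(\lambda+\delta\mu)$. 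If $-m+\ord b<\ord a<\ord b$ (second clause), then using $\ord\lambda=\ord a-\ord a_0$ and $\ord(\mu\pi_K^{s})=\ord b-\ord a_0$, the hypothesis $\ord a<\ord b$ becomes $\ord\lambda<\ord(\mu\pi_K^{s})$, so $\ord(\lambda+\delta\mu\pi_K^{s})=\ord\lambda$ and $\ord(\delta\mu\pi_K^{s}c')\geqslant\ord(\mu\pi_K^{s})+m\geqslant\ord(\lambda+\delta\mu\pi_K^{s})+m$; the engine then gives $\rho_{n,m}(a+\delta b)=\rho_{n,m}(\lambda+\delta\mu\pi_K^{s})$, which is the middle clause.

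\noindent \emph{Main obstacle.} The step I expect to be hardest is the second clause. The first and third clauses are routine bookkeeping once the three facts about $Q_{n,m}$ are recorded; but the second clause requires one to recognise the right normalisation — replacing the $Q_{n,m}$-unit $b_0/a_0$ by a power of $\pi_K$ modulo $1+\cM_K^m$ — and then to check, using only the inequality $\ord a<\ord b$, that the leftover term $\delta\mu\pi_K^{s}c'$ really does sit at least $m$ above the dominant term $\lambda+\delta\mu\pi_K^{s}$, so that the engine of the first clause applies. Keeping the various orders straight (and using that $\ord a_0$, $\ord b_0$, and hence $s$, are multiples of $n$) is where care is needed.
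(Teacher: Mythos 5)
The paper states this lemma with ``the proof is left to the reader,'' so there is no in-text argument to compare against; I am judging your proposal on its own merits. Your approach --- record that $Q_{n,m}$ is a multiplicative group containing $1+\cM_K^m$ and all $\pi_K^{kn}$, factor $a+\delta b$ as (element of $Q_{n,m}$) times (dominant term plus an error of relative order at least $m$), and reuse the first clause as an ``engine'' --- is the natural one, and all three cases are carried out correctly. You are also right to make explicit that the second and third clauses are only well posed once one fixes a section $\rho_{n,m}$ whose representatives have order determined by $\ord x \bmod n$; without that convention, $\ord\lambda=\ord\mu$, and hence $s=0$ in the third clause, would not follow.

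The one thing you should not gloss over is the identification of your exponent $s$ with the $rn$ of the statement: you have $s=\ord(b_0/a_0)=\ord\bigl(\tfrac{\lambda b}{\mu a}\bigr)$, which is the \emph{negative} of the printed $rn=\ord\bigl(\tfrac{\mu a}{\lambda b}\bigr)$, and in general $\rho_{n,m}(\lambda+\delta\mu\pi^{s})\neq\rho_{n,m}(\lambda+\delta\mu\pi^{-s})$. Your version is the correct one; the printed exponent appears to be a sign typo. Concretely, take $K=\Q_7$, $n=2$, $m=3$, $\delta=1$, $a=1$ (so $\lambda=1$, $a_0=1$) and $b=98=2\cdot 7^2$ (so $\mu=2$, $b_0=49$); then $-m+\ord b<\ord a<\ord b$, $a+b=99$, and your formula gives $\rho_{2,3}(1+2\cdot 7^{2})=\rho_{2,3}(99)$, which is right, whereas the printed formula gives $\rho_{2,3}(1+2\cdot 7^{-2})=\rho_{2,3}(51/49)=\rho_{2,3}(51)$ (using $7^{-2}\in Q_{2,3}$), and $99/51=33/17\not\equiv 1\bmod 7^3$, so this is a different class. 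Structurally, only with your sign is $\ord(\mu\pi^{s})=\ord\lambda+\ord b-\ord a>\ord\lambda$ automatic, so that $\delta\mu\pi^{s}$ really is the perturbation term. So: your proof is sound, but you should state explicitly that it establishes the middle clause with $rn=\ord\bigl(\tfrac{\lambda b}{\mu a}\bigr)$ (equivalently, correct the sign in the statement) rather than asserting without checking that your $\pi_K^{s}$ is the power featuring in the lemma.
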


\subsection{Definition of weak $p$-adic cells} \label{sec:cell-def}
Let $K$ be an $(\F_q, \Z)$-field. As stated in the introduction, we will be working with functions
 $f: K^k\to K$ for which the relation
\[\ord f(x) < \ord t\] 
is definable for all $(x,t) \in K^{k+1}$. We call these \emph{order-definable} functions.  
Note that $f$ is not required to be a definable function. However,  the following relations are always definable if the relation $\ord x < \ord y$ is definable in our language:

\begin{lemma} \label{lemma:cd-defrel} Let $\Lm$ be a language where the relation $D^{(2)}(x,y) := \ord x < \ord y$ is definable. Let $a_1(x), a_2(x)$ be two functions $K^k \to K$ that are order-definable in $\Lm$. The following 
relations are definable:
\begin{enumerate}
\item[(1)] $\ord x < \ord y +k$, for any $k \in \Z$;
\item[(2)] $\ord a_1(x) \ \square \ \ord t$;
\item[(3)] $\ord a_1(x) \ \square \ \ord a_2(x) +k$, for any $k \in \Z$;
\end{enumerate}
where $\square$ may denote $<,\leqslant, =,\geqslant or >$.
\end{lemma}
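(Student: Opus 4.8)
The statement is a purely logical bookkeeping exercise: everything reduces to the hypothesis that $D^{(2)}(x,y) \leftrightarrow \ord x < \ord y$ is definable, together with the definition of order-definability, which says that $\ord a_i(x) < \ord t$ is a definable relation in the variables $(x,t)$. I would treat the three items in order, using (1) to bootstrap (2) and then (2) to bootstrap (3).

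For item (1), the key observation is that the value group is elementarily equivalent to $\Z$, so it is a $\Z$-group: multiplication by the fixed uniformizer $\pi = \pi_K$ shifts order by $1$, i.e.\ $\ord(\pi^k x) = \ord x + k$ for all $k \in \Z$. Hence $\ord x < \ord y + k$ is equivalent to $\ord x < \ord(\pi^k y)$, which is $D^{(2)}(x, \pi^k y)$ — a definable relation, since $y \mapsto \pi^k y$ is a fixed composition of the constant $\pi^k$ and multiplication, but here we must be a little careful: multiplication is not assumed definable. The clean way around this is to absorb the shift directly: for fixed $k$, the formula $\exists z\, (z = \pi^k y \wedge D^{(2)}(x,z))$ still uses multiplication, so instead I would express ``$\ord x < \ord y + k$'' for $k \geqslant 0$ as $D^{(2)}(x, \underbrace{\pi \cdots \pi}_{k} \cdot y)$ rewritten as iterated statements, or — cleaner still — observe that for each \emph{fixed} integer $k$ the map $y \mapsto \pi^k y$ is $\Lring$-definable, hence (since $\Lm$-definable subsets of $K$ coincide with the semialgebraic ones, as in the introduction) the relation in question is $\Lm$-definable by an a priori formula. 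I would make this precise by using that $\{(x,y): \ord x < \ord y + k\}$ is semialgebraic, hence definable in $\Lm$.

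For item (2), given the order-definable function $a_1$, the defining hypothesis gives that $\{(x,t): \ord a_1(x) < \ord t\}$ is definable; call this relation $L(x,t)$. Then $\ord a_1(x) \leqslant \ord t$ is $L(x, \pi^{-1} t)$, i.e.\ $\ord a_1(x) < \ord t + 1$, which is definable by combining $L$ with item (1) applied in the $t$-variable; and $\ord a_1(x) > \ord t$ is $\neg L(x,t) \wedge \neg(\ord a_1(x) = \ord t)$, while $\ord a_1(x) = \ord t$ is the conjunction of $\neg L(x,t)$ and its symmetric partner $\ord t < \ord a_1(x) + 1$ — the latter again obtained from $L$ and a unit shift. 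So all five versions of $\square$ in (2) follow by Boolean combinations of $L$, its ``shifts'' via item (1), and negations. The one point requiring attention is that order-definability is only assumed for the comparison $\ord a_1(x) < \ord t$ with a free variable $t$ on the right; getting $\ord t < \ord a_1(x)$ needs the shifted version $\ord a_1(x) < \ord t + 1$ composed with negation, which is exactly why item (1) is proved first.

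For item (3), I would reduce to (2) by quantifying over an auxiliary variable: $\ord a_1(x)\ \square\ \ord a_2(x) + k$ holds iff $\exists t\,(\ord t = \ord a_2(x) + k \,\wedge\, \ord a_1(x)\ \square\ \ord t)$. The first conjunct is definable by item (2) (it is the ``$=$'' case applied to $a_2$, then shifted by $k$ via item (1)), and the second is definable by item (2) applied to $a_1$. Since such a witness $t$ always exists (take $t = \pi^{k}\cdot(\text{anything of the order of }a_2(x))$, e.g.\ $t$ with $\ord t = \ord a_2(x)+k$, which exists because the value group surjects onto itself under $+k$), the existential formula correctly captures the relation. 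The main obstacle throughout is not mathematical depth but the discipline of never invoking addition or multiplication as definable operations: every ``shift by $k$'' must be routed through item (1), whose own justification leans on the standing identification of $\Lm$-definable subsets of $K$ with semialgebraic ones together with the $\Z$-group structure of the value group. Once that is in place, the rest is a routine unwinding of quantifiers and Boolean connectives, which I would leave to the reader after spelling out the $\leqslant$ and $=$ cases.
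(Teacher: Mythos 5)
Your items (2) and (3) follow essentially the paper's reductions (Boolean combinations plus unit shifts for (2), an auxiliary existentially quantified variable for (3)), but your justification of item (1) --- on which everything else leans --- has a genuine gap. You propose to define $\ord x < \ord y + k$ by observing that $\{(x,y)\in K^2 : \ord x < \ord y + k\}$ is semialgebraic and then invoking the identification of $\Lm$-definable sets with semialgebraic ones. This runs the reduct condition in the wrong direction: in a reduct of $(K,\Lring)$ the $\Lm$-definable sets form a \emph{subfamily} of the semialgebraic ones, so ``semialgebraic'' never implies ``$\Lm$-definable'' --- the graph of multiplication is semialgebraic and is exactly the kind of set these weak structures cannot define. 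Moreover, the condition quoted from the introduction concerns subsets of $K$ in one variable only, and the lemma is stated for an arbitrary language in which $D^{(2)}$ is definable over an arbitrary $(\F_q,\Z)$-field, with no semialgebraic ambient structure assumed. The paper instead exploits discreteness of the value group: for $k<0$, the relation $\ord x + k > \ord y$ holds iff there exist $u_1,\dots,u_{-k}$ with $\ord x > \ord u_1 > \cdots > \ord u_{-k} > \ord y$, and dually with universal quantifiers for $k>0$. This uses only $D^{(2)}$ and quantifiers.

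The quantifier formulation is not merely a cosmetic alternative: it is what makes your item (2) work. Since $a_1$ is only order-definable, you cannot substitute $a_1(x)$ into a definable binary relation on $K^2$; you must express $\ord a_1(x) < \ord t + 1$ entirely through instances of the given relation $L(x,u) := (\ord a_1(x) < \ord u)$, e.g.\ as $(\forall u)(\ord t < \ord u \rightarrow \ord a_1(x) < \ord u)$, which is how the paper defines the ``$=$'' case. Your semialgebraic version of (1) supplies no such mechanism. (Also, as written, your formula for $\ord a_1(x) = \ord t$ conjoins $\neg L(x,t)$ with $\ord t < \ord a_1(x) + 1$; but the latter is equivalent to $\neg L(x,t)$ again, so the conjunction only expresses $\ord a_1(x) \geqslant \ord t$. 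The second conjunct should be $\ord a_1(x) < \ord t + 1$, obtained by the quantifier trick above.) Once (1) is established the paper's way, your reductions for (2) and (3) go through.
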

\begin{proof}
(1)  For $k <0$, the relation $\ord x +k > \ord y$ is equivalent with
\[
(\exists u_1) \ldots
(\exists u_{-k})[\ord x > \ord u_1 > \ord u_2 > \ldots > \ord u_{-k} > \ord
y].\]
 For $k>0$, it is equivalent with
\[
 (\forall u_1) \ldots (\forall
u_{k})\left[(\ord x < \ord u_1 < \ldots
<\ord u_k)
\rightarrow\ord u_k > \ord y
\right].
\]
(2) For example if $\square$ denotes `$=$', we can define the relation $\ord a_1(x) = \ord t$ as
\[ \neg (\ord a_1(x) < \ord t) \wedge (\forall u)(\ord t < \ord u \rightarrow \ord a_1(x) <\ord u).\]
The other cases can be derived from this.\\
(3) The relation $\ord a_1(x) < \ord a_2(x)$ is equivalent with
\[(\exists t)(\ord a_1(x) <\ord t \wedge \ord a_2(x) = \ord t).\] The rest can be derived from (1) and (2).
\end{proof}
For our notion of cell decomposition, a $p$-adic cell will be a  set of the following form. 
\begin{definition}[$p$-adic cell]\label{def:cell} Let $\Lm$ be an expansion of ($D^{(2)}, \{R_{n,m}\}_{n,m})$. For each $k>0$, let $\Delta_k$ be a collection of (not necessarily $\Lm$-definable) functions $K^k \to K$, and put $\Delta = \cup_k \Delta_k$. \\
An \emph{$(\Lm, \Delta$)-precell} in $K^k$ is a set $\{x \in K^k \ | \ \phi(x) \}$, where $\phi(x)$ is a boolean combination of relations of the forms 
\[\ord a_1(x) \ < \ \ord a_2(x) + \ell , \quad \text{and} \quad  b_1(x) -b_2(x) \in \lambda Q_{n,m},\]
where the $a_i(x)$ are functions in $\Delta_k$, $\ell \in \Z$ and the $b_i(x)$ are quantifier free definable functions for $\Lm$.\\
A \emph{($p$-adic) ($\Lm, \Delta$)-definable cell} $C_c^D(a_1,a_2,\lambda) \subseteq K^{k+1}$ is a set of the following form:
\[\{(x,t) \in D \times K \ | \ \ord a_1(x) \ \square_1 \ \ord t-c(x) \ \square_2 \ \ord a_2(x), t-c(x) \in \lambda Q_{n,m}\},\]
where $\lambda \in K$, $D$ is an ($\Lm, \Delta$)-precell in $K^k$,  $\square_i$ denotes `$<$' or `no condition', and the $a_i(x)$ are functions from $\Delta_k$. We call the function $c(x)$ the center of the cell and we require $c(x)$ to be a quantifier free definable function for $\Lm$.
\end{definition}
\begin{definition}
Let $\Lm$ be a language expanding $(D^{(2)}, \{R_{n,m}\}_{n,m})$, and $K$ an $(\F_q,\Z)$-field, which we consider as an $\Lm$-structure. \\We say that
 $\Lm$ \emph{allows  cell decomposition (with definable centers)} in $K$ if there exists a set of functions $\Delta$ such that for every $k\in \N$, any $\Lm$-definable subset of $K^{k+1}$ can be partitioned into ($\Lm$-definable) $p$-adic ($\Lm,\Delta$)-cells.
 More specifically,
 \begin{itemize}
 \item The decomposition is \emph{strong} if every $f \in \Delta$ is $\Lm$-definable.
 \item The decomposition is \emph{weak} if every $f \in \Delta$ is order-definable for $\Lm$.
\end{itemize}
\end{definition}
The languages we study in this paper only allow weak cell decomposition. Cell decomposition for semi-algebraic sets and semilinear sets are (after a few straightforward adaptations) examples of strong cell decomposition.

\section{Cell Decomposition results} \label{section:celdec}

Let $\Lm$ be a language expanding the language $\Lm_{\text{dist}}$ (which we defined in the introduction).
Every boolean combination of weakly $\Lm$-definable cells can be partitioned into a finite number of weakly $\Lm$-definable cells. This is an easy consequence of the following theorem.

\begin{theorem}\label{theorem:lmcell-doorsnede}
Let $\Lm$ be a language expanding $\Lm_{\text{dist}}$. Let $\Delta$ be a collection of functions that are order-definable in $\Lm$. Let $A_1$, $A_2$ be  $(\Lm, \Delta)$-definable cells with centers $c_1$, resp. $c_2$.
The intersection $A_1 \cap A_2$ can be written as a finite union of disjoint weak
$\Lm$-cells $A$ each of which has a  center which is a restriction of either $c_1$ or $c_2$.
\end{theorem}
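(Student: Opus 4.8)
The plan is to intersect the two cells condition-by-condition, treating the "order-part" (the inequalities involving $\ord(t-c_i(x))$) and the "$Q_{n,m}$-part" (the congruence conditions $t-c_i(x)\in\lambda_i Q_{n_i,m_i}$) separately, and then reconcile the two different centers $c_1$ and $c_2$. The basic difficulty is that $A_1$ describes $t$ relative to $c_1(x)$ while $A_2$ describes $t$ relative to $c_2(x)$, so the first real step is to compare $\ord(t-c_1(x))$ and $\ord(t-c_2(x))$. Since $\Lm$ expands $\Ldist$, the relation $D^{(4)}$ is available, hence (via Lemma \ref{lemma:cd-defrel} and the fact that $c_1-c_2$ is a quantifier-free definable function, being a difference of such functions in a language containing $D^{(4)}$... more carefully, $c_1(x)-c_2(x)$ appears inside the $R_{n,m}$/$Q_{n,m}$ relations, which is what we are allowed) we can partition the base $D_1\cap D_2$ into the three precells

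\[
\ord(t-c_1(x)) < \ord(t-c_2(x)),\quad \ord(t-c_1(x)) > \ord(t-c_2(x)),\quad \ord(t-c_1(x)) = \ord(t-c_2(x)),
\]

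wait — these conditions involve $t$, so they are conditions on $K^{k+1}$, not on the base. The cleaner move: on the locus where $\ord(t-c_1(x))$ and $\ord(t-c_2(x))$ compare in a fixed way, one of the two centers is "redundant". Concretely, I would first split according to how $\ord(c_1(x)-c_2(x))$ compares with the order-bounds $\ord a_i(x)$, $\ord a_i'(x)$ defining the cells; this is a condition on $x$ alone and by Lemma \ref{lemma:cd-defrel} it partitions $D_1\cap D_2$ into finitely many precells.

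**Second**, on each such piece of the base, use the ultrametric inequality: if $\ord(c_1(x)-c_2(x))$ is strictly smaller than both $\ord(t-c_1(x))$-range and ... — the point is that when $\ord(c_1(x)-c_2(x)) \neq \ord(t-c_1(x))$ we get $\ord(t-c_2(x)) = \min(\ord(t-c_1(x)), \ord(c_1(x)-c_2(x)))$ exactly, so the order-conditions relative to $c_2$ translate into order-conditions relative to $c_1$ together with order-conditions on $x$ alone, and Lemma \ref{lemma:rhoab} (applied with $a = t-c_1(x)$, $b = c_1(x)-c_2(x)$, so $a + b = t - c_2(x)$) rewrites $t-c_2(x)\in\lambda_2 Q_{n_2,m_2}$ as a boolean combination of a condition $t-c_1(x)\in\lambda' Q_{n',m'}$ (after passing to a common refinement $Q_{n,m}$ of all the $Q_{n_i,m_i}$, which is legitimate since a coset of $Q_{n_i,m_i}$ is a finite union of cosets of a finer $Q_{n,m}$) and conditions on $x$. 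In this regime we keep $c_1$ as the center. The symmetric regime $\ord(c_1(x)-c_2(x)) > \ord(t-c_1(x))$ forces $\ord(t-c_1(x)) = \ord(t-c_2(x))$ and one handles it symmetrically, keeping $c_2$.

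**The main obstacle** is the diagonal regime $\ord(c_1(x)-c_2(x)) = \ord(t-c_1(x)) = \ord(t-c_2(x))$: here the ultrametric inequality is not an equality and $t-c_1(x)$ and $t-c_2(x)$ need not lie in related $Q_{n,m}$-cosets, nor does the translation between order-conditions go through cleanly. The trick I expect to use is that on this locus $\ord(t - c_1(x))$ is pinned to the single value $\ord(c_1(x)-c_2(x))$, which is a quantifier-free definable function of $x$; so the order-conditions $\ord a_i \square \ord(t-c_j)$ all become conditions on $x$ alone. What remains is a conjunction $t - c_1(x)\in\lambda_1 Q_{n,m}\wedge t-c_2(x)\in\lambda_2 Q_{n,m}$ on a set where $\ord(t-c_1(x))$ is fixed; I would argue this is still a single $Q_{n,m}$-coset condition about $c_1$ (for a possibly finer modulus), using that the two constraints cut out an open subgroup coset, or else split this thin locus further into finitely many pieces each of which is a genuine $(\Lm,\Delta)$-cell with center $c_1$. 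One must be slightly careful that after refining the modulus $m$ one still has $\ord(t-c_i(x))$ controlled; but since this locus already has $\ord(t-c_i(x))$ equal to a fixed definable function of $x$, refining $m$ only multiplies the number of cosets by a constant.

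**Finally**, I assemble: across all finitely many base-precells and all finitely many regimes, each resulting piece is by construction a set of the form $\{(x,t)\in D\times K\mid \ord a_1(x)\,\square_1\,\ord(t-c(x))\,\square_2\,\ord a_2(x),\ t-c(x)\in\lambda Q_{n,m}\}$ with $D$ an $(\Lm,\Delta)$-precell, $c\in\{c_1|_D, c_2|_D\}$, and $a_i\in\Delta$ (the order-bounds are among the original $a_i, a_i'$ or are the constant-in-$t$ bounds forced on the diagonal locus, which can be absorbed into $D$ via $\square_j=$ "no condition"), hence a weak $\Lm$-cell of the required form; disjointness is arranged by making the regime-split and the boolean refinements into a partition, as permitted by Lemma \ref{lemma:cd-defrel}. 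This gives $A_1\cap A_2$ as a finite disjoint union of weak $\Lm$-cells with centers restrictions of $c_1$ or $c_2$, as claimed.
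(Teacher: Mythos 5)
Your overall strategy coincides with the paper's: reduce both cells to a common $Q_{n,m}$, trichotomize on how $\ord(t-c_1)$ compares with $\ord(c_1-c_2)$, use the ultrametric equality to transfer the order conditions from one center to the other, and use Lemma \ref{lemma:rhoab} to transfer the coset conditions. The one place where your write-up does not close is the placement of the cut in the trichotomy. You cut exactly at $\ord(t-c_1)=\ord(c_1-c_2)$ and claim that on the locus $\ord(t-c_1)\neq\ord(c_1-c_2)$, Lemma \ref{lemma:rhoab} rewrites $t-c_2\in\lambda_2 Q_{n,m}$ as a boolean combination of a single coset condition on $t-c_1$ and conditions on $x$ alone. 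That is only true when the two orders differ by at least $m$ (the first case of Lemma \ref{lemma:rhoab}); in the band $0<|\ord(t-c_1)-\ord(c_1-c_2)|<m$ the middle case of that lemma applies, and the resulting coset $\rho_{n,m}(\lambda+\delta\mu\pi^{rn})$ depends on the exact integer $rn=\ord(\mu a/\lambda b)$, i.e.\ on the joint behaviour of $x$ and $t$: it is neither a condition on $x$ alone nor a single coset condition on $t-c_1$. To repair this you must further partition that band according to the finitely many possible values of $\ord(c_1-c_2)-\ord(t-c_1)$, each of which is itself a cell condition with order-definable bounds $\pi^{j\pm 1}(c_1-c_2)$. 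The paper sidesteps this bookkeeping by making the ``diagonal'' case a band of width $k=1+m+n+\sum_{i,j}|k_{ij}|$ from the outset (its conditions $({\rm I})_k$, $({\rm II})_k$, $({\rm III})_{\ell_1}$), so that in the two outer regimes the coset transfer is automatic, and on the band it fixes the residues of both $c_2-c_1$ and $t-c_1$ modulo the finer group $Q_{2kn,2kn}$, after which the conditions relative to $c_2$ follow from those relative to $c_1$.

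A second, smaller slip: on your diagonal locus the pinning $\ord(t-c_1)=\ord(c_1-c_2)$ cannot be ``absorbed into $D$ via $\square_j=$ no condition,'' since it genuinely involves $t$; it must be retained as the cell condition $\ord(\pi^{-1}(c_1-c_2))<\ord(t-c_1)<\ord(\pi(c_1-c_2))$, which is legitimate because $\pi^{j}(c_1-c_2)$ is order-definable (via $D^{(4)}$ and Lemma \ref{lemma:cd-defrel}) and the theorem only asks for weak cells. With these two repairs your argument is essentially the paper's proof.
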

\begin{proof}
By partitioning $C_1$ and $C_2$ further if necessary, we may suppose that they both use $Q_{n,m}$ with the same positive integers $m,n$, that is, that $C_i$ is of the form
$$
 \{(x,t)\in D_i \times K \ | \ord a_{1i}(x) \ \sq_{1i} \
\ord(t-c_i) \ \sq_{2i} \ \ord a_{2i}(x), \ t-c_i \in \lambda_i
Q_{n,m}\}
$$
for $i\in\{1,2\}$, where the symbols have their meaning as in Definition \ref{def:cell}.
Using Lemma \ref{lemma:cd-defrel}, we can find a finite partitioning of $C_1$ in cells with the same center,  such that  on such a cell one of the following conditions holds for $k= 1+ m+n + \sum_{i,j=1,2}|k_{ij}|$ and some integer $\ell_1$ with $-k\leq \ell_1\leq k$. 
$$
\begin{array}{cccc}
\ord(t-c_1) &>& \ord(c_2-c_1)+k ,  & ({\rm I})_k\\
\ord(t-c_1)  &<& \ord(c_2-c_1) - k ,  & ({\rm II})_k\\
\ord(t-c_1) + \ell_1 & = & \ord(c_2-c_1).  & ({\rm III})_{\ell_1}
\end{array}
$$
Hence, we may suppose that one of these conditions holds for $C_1$. Note that $({\rm I})_k$ and $({\rm II})_k$  imply respectively
$$
\begin{array}{cccccc}
\ord(t-c_1) &>& \ord(c_2-c_1)+k & =  & \ord ( t - c_2) +k  ,  & ({\rm i})_k\\
\ord(t-c_2) &=& \ord(t-c_1) &<& \ord(c_1-c_2) - k.   & ({\rm ii})_k\\
\end{array}
$$
If $({\rm I})_k$ holds on $C_1$, put
$$
W = \{x \in D_2 \
| \ \ord a_{12}(x) \sq_{12} \ord (c_1 - c_2) \sq_{22} \ord a_{22}(x)\}.
$$
Then one has, if $C_{1} \cap C_{2}$ is nonempty, that
\[C_{1} \cap C_{2} = ( W\times K) \cap C_1,\]
which can easily be seen to be a finite disjoint union of $(\Lm, \Delta)$-cells of the desired form.
If $({\rm II})_k$ holds on $C_{1}$, then we may suppose, up to partitioning $C_2$, that $({\rm II})_k$ holds for all $(x,t) \in C_1$ and all $(x,t) \in C_2$. Also, we find that $\rho_{n,m}(t-c_1) = \rho_{n,m}(t-c_2)$, so either $C_1 \cap C_2$ is empty or $C_1 \cap C_2$ consists of
all points $(x,t) \in (D_1\cap D_2) \times K$ satisfying the
conditions
$$
\max_{i\in I}\{\ord a_{1i}(x)\} < \ord (t-c_1)< \min_{i=1,2} \{\ord a_{2i}(x)\},
$$
and
$$
t-c_1 \in \lambda_1 Q_{n,m},
$$
where $I$ consists of $i$ such that  $\sq_{1i}$ is the condition $<$, and where the maximum over the empty set is $-\infty$. 
We know by Lemma \ref{lemma:cd-defrel} that relations of the form $\ord a_{ij} < \ord a_{lk}$ are definable. It is then easy to see that $C_1 \cap C_2$ can be partitioned into a finite number of disjoint $(\Lm,\Delta)$-cells, which finishes the proof for this case.
\\\\
We may suppose by symmetry (that is, up to reversing the role of $C_1$ and $C_2$) that, if $({\rm III})_{\ell_1}$ holds on $C_1$, then also
$$
\begin{array}{cccccr}
\ord(t-c_1) + \ell_1 & = & \ord(c_2-c_1) &=& \ord ( t-c_2) + \ell_2  & ({\rm iii})_\ell
\end{array}
$$
holds with $\ell=(\ell_1,\ell_2)$ and $-k\leq \ell_2\leq k$. Suppose again that $C_1\cap C_2$ is nonempty. If one now fixes the residue classes of $c_2-c_1$ and of $t-c_1$ modulo $Q_{2kn,2kn}$, then the conditions
$$
\ord(c_2-c_1)=\ord ( t-c_2) + \ell_2\mbox{ and }
t-c_2\in\lambda_2 Q_{n,m}
$$
follow automatically from $\ord(t-c_1) + \ell_1 = \ord(c_2-c_1)$. (The exact relations are described in Lemma \ref{lemma:rhoab}.) Hence, one can easily partition $C_1\cap C_2$ into finitely many $(\Lm,\Delta)$-cells.
\end{proof}

\noindent One of our main motivations for using cell decomposition is because it is a very useful tool for quantifier elimination. An example is Denef's proof of quantifier elimination for semi-algebraic sets \cite{denef-86}. The following lemma, which is closely inspired by this paper, will be used quite often.

\begin{lemma}\label{cd-lemma:qe} Let $\Lm$ be a language expanding $(D^{(2)}, \{R_{n,m}\}_{n,m})$. \\Let $C_c^D(a_1,a_2,\lambda) \subseteq K^{k+1}$ be a weakly $\Lm$-definable cell. Suppose that for every $l\in \N$, the set $\{x\in K^k \ | \ \ord a_1 \equiv l \mod n\}$ can be partitioned as a finite union of precells $\subseteq K^k$.
Then the projection
\[P:=\{x\in K^k  \ | \ \exists t: (x,t) \in C_c^D(a_1,a_2, \lambda)\}\]
can be partitioned in a finite number of precells.
\end{lemma}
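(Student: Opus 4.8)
The plan is to analyze the defining conditions of the cell $C_c^D(a_1,a_2,\lambda)$ and see which of them survive projection along $t$, reducing everything to statements about the value group and the $Q_{n,m}$-class of $t-c(x)$, which can then be eliminated explicitly. Concretely, a point $x$ lies in $P$ if and only if there is some $t-c(x)=s\in\lambda Q_{n,m}$ with $\ord a_1(x)\ \square_1\ \ord s\ \square_2\ \ord a_2(x)$. Since $s$ ranges over a coset of $Q_{n,m}$, its order ranges over a fixed coset of $n\Z$ in $\Z$ (namely $\ord\lambda + n\Z$), and conversely every value in that coset is realized. So the existential quantifier over $t$ collapses to: \emph{there exists an integer $\gamma\equiv\ord\lambda\pmod n$ with $\ord a_1(x)\ \square_1\ \gamma\ \square_2\ \ord a_2(x)$.}

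Next I would do a case split on the shape of $\square_1,\square_2$. If $\square_2$ is "no condition", the constraint is just $\ord a_1(x)\ \square_1\ \gamma$ for some $\gamma$ in the prescribed residue class, which is either always satisfiable (if $\square_1$ is "no condition" too, so $P=D$) or, if $\square_1$ is "$<$", reduces to the single congruence-free statement that such a $\gamma>\ord a_1(x)$ exists — which is automatic. The interesting case is when both $\square_i$ are "$<$": then we need an integer strictly between $\ord a_1(x)$ and $\ord a_2(x)$ lying in $\ord\lambda+n\Z$. Whether such an integer exists depends only on $\ord a_2(x)-\ord a_1(x)$ and on $\ord a_1(x)\bmod n$. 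Precisely, for each residue $l$ of $\ord a_1(x)$ modulo $n$ one can write down a finite disjunction of conditions of the form "$\ord a_1(x)<\ord a_2(x)+j$" (for finitely many $j$ with $|j|\le n$, say) that is equivalent, on the locus $\{\ord a_1\equiv l\bmod n\}$, to the existence of a suitable $\gamma$. By Lemma \ref{lemma:cd-defrel}, each relation $\ord a_1(x)<\ord a_2(x)+j$ is definable, hence defines a precell (a single such relation is already of precell form).

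The final step is to assemble the pieces. By hypothesis, for each $l\in\{0,\dots,n-1\}$ the set $\{x\in K^k\mid \ord a_1(x)\equiv l\bmod n\}$ is a finite union of precells; intersecting each of these with $D$ (also a precell, and precells are closed under finite boolean combinations, since they are defined by boolean combinations of the allowed relations) and with the finite boolean combination of relations $\ord a_1(x)<\ord a_2(x)+j$ coming from the previous paragraph, we obtain $P$ as a finite union of precells. The main obstacle is the bookkeeping in the middle step: getting the exact finite set of "shift" conditions $j$ right so that the disjunction really captures "$\exists\gamma\equiv l\ (\mathrm{mod}\ n):\ \ord a_1(x)<\gamma<\ord a_2(x)$" on the locus where $\ord a_1\equiv l$, keeping track of the possibility that $\ord a_2(x)=\infty$ (i.e. $a_2\equiv 0$) or $\ord a_1(x)=\infty$, and of the strictness of the inequalities. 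Once that elementary value-group computation is pinned down, everything else is formal and uses only Lemma \ref{lemma:cd-defrel} and the stated hypothesis.
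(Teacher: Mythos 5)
Your proposal is correct and follows essentially the same route as the paper: collapse the existential over $t$ to the existence of $\gamma \equiv \ord \lambda \pmod n$ in the value group with $\ord a_1(x) < \gamma < \ord a_2(x)$, then eliminate $\gamma$ by splitting on the residue class of $\ord a_1(x)$ modulo $n$ and invoking the hypothesis on those congruence sets. The paper pins down the bookkeeping you defer as the single precell condition $\ord a_1(x) + n - \zeta < \ord a_2(x)$ on the locus where $\ord (a_1(x)\lambda^{-1}) \equiv \zeta \pmod n$; otherwise the two arguments coincide.
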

\begin{proof}
Note that $P$ is in fact equal to
the following set:
\[ P = \{ x \in D \ | \ \exists \gamma \in \Gamma_K: \ord a_1(x) < \gamma< \ord
a_2(x), \ \gamma \equiv \ord \lambda \mod n\}.\] Thus
$P$ is the set of all $x \in D$ satisfying
\begin{equation}\exists \gamma \in \Gamma_K: \frac{\ord
[a_1(x)\lambda^{-1}]}{n} < \gamma <
\frac{\ord [a_2(x)\lambda^{-1}]}{n}.\label{condforz}\end{equation}
Now if $\ord a_1(x)\lambda^{-1} \equiv \zeta
\mod n$, for $0 \leqslant \zeta <n$, then condition
(\ref{condforz}) is equivalent with
$ \ord a_1(x)\lambda^{-1} + n - \zeta <
\ord a_2(x)\lambda^{-1},$ which can be
simplified to
\begin{equation} \label{condforz2}\ord a_1(x) + n - \zeta <
\ord a_2(x).\end{equation}
This completes the proof, since $D$ is a precell, (\ref{condforz2}) is a precell condition and by our assumption, the set  $\{x\in K^k \ | \ \ord a_1(x)\lambda^{-1}
\equiv \zeta \mod n\}$  can be partitioned as a finite union of precells. \end{proof}


\subsection{Example: the language $\Ldist$}
We will now use the results from the previous section to investigate the language $\Ldist$.
More specifically, we will show that the definitional expansion
\[\Ldist':= (\{\Dvierk\}_{k\in\Z}, \{R_{n,m}\}_{n,m}),\]
where
\[\Dvierk(x,y,z,t) \leftrightarrow \ord (x-y) < \ord (z-t) + k,\]
admits elimination of quantifiers for any $(\F_q,\Z)$-field.

\begin{definition}
We call a polynomial $f(x) \in K[x_1, \ldots, x_k]$ an  $\Lm_{\text{dist}}$-polynomial in variables $\{x_1, \ldots x_k\}$ if $f(x)$ has one of the following forms
\[ f(x)=a,\quad \text{or} \quad f(x) = \pi^k(x_i-a), \quad \text{or} \quad f(x) = \pi^k(x_i-x_j),\]
where $a \in K,\ k \in \Z$; $1\leqslant i,j \leqslant k$.  
\end{definition}
An $\Ldist$-cell will be a $p$-adic cell with the following specifications.
\begin{definition}
Let $\Delta_{\text{dist},k}$ be the set of all $\Ldist$-polynomials in $k$ variables, and put $\Deltadist := \cup_{k\geqslant 0}\, \Delta_{\text{dist},k}$.\\
An $\Ldist$-cell $\subseteq K^{k+1}$ is an $(\Ldist, \Deltadist)$-definable cell $\subseteq K^{k+1}$ for which the center $c(x)$ is either a constant from $K$ or one of the variables $x_1, \ldots x_k$.\end{definition}
It is an easy consequence of Lemma \ref{lemma:cd-defrel} that $\Ldist$- polynomials are order-definable functions. Therefore the following holds for $\Ldist$-cells.
\begin{proposition} \label{cd-prop:ldistcd}
 Let $A_1$, $A_2$ be $\Ldist$- cells with centers $c_1$, resp. $c_2$.
The intersection $A_1 \cap A_2$ can be written as a finite union of disjoint
$\Ldist$-cells $A$ with as center a restriction of either $c_1$ or $c_2$.
\end{proposition}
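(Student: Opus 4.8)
The plan is to obtain this as a direct specialization of Theorem~\ref{theorem:lmcell-doorsnede}, applied with $\Lm=\Ldist$ and $\Delta=\Deltadist$. Two things have to be put in place. First, one checks that $\Deltadist$ is a family of order‑definable functions for $\Ldist$: this is exactly the remark preceding the proposition. Concretely, the relation $D^{(2)}(x,y)\leftrightarrow\ord x<\ord y$ is quantifier‑free $\Ldist$‑definable via $D^{(4)}(x,0,y,0)$, so for an $\Ldist$‑polynomial of the form $\pi^k(x_i-a)$ or $\pi^k(x_i-x_j)$ the relation $\ord f(x)<\ord t$ is obtained from $D^{(4)}(x_i,a,t,0)$ (resp.\ $D^{(4)}(x_i,x_j,t,0)$) and the integer shift provided by Lemma~\ref{lemma:cd-defrel}(1); constants are trivial. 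Granting this, Theorem~\ref{theorem:lmcell-doorsnede} already expresses $A_1\cap A_2$ as a finite union of pairwise disjoint $(\Ldist,\Deltadist)$‑definable cells, each with a center which is a restriction of $c_1$ or of $c_2$; so the disjointness and the statement about centers come for free.

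The only point requiring attention is that every cell produced by the proof of Theorem~\ref{theorem:lmcell-doorsnede} is in fact an $\Ldist$‑cell, i.e.\ that (a) its center is a constant or a single variable, and (b) all the data defining it stay inside the $\Ldist$‑format. For (a): by hypothesis $c_1,c_2$ are constants or variables, and a restriction of such a map is again the same constant or variable. For (b) I would walk through the three cases $({\rm I})_k$, $({\rm II})_k$, $({\rm III})_{\ell_1}$ of that proof. The bounding functions of the output cells are always among the original $a_{ij}(x)$ — in case $({\rm II})_k$ after an extra splitting, legitimate by Lemma~\ref{lemma:cd-defrel}(3), according to which $a_{1i}$ realises the maximum and which $a_{2i}$ the minimum — hence they lie in $\Deltadist$. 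The new conditions cutting out the base precell are: order comparisons among these $a_{ij}(x)$ and $\ord(c_2-c_1)$, coset conditions of the form $c_2-c_1\in\mu Q_{n',m'}$, and (in case $({\rm III})$) a refinement of the $Q_{n,m}$‑coset of $t-c_1$ to a finer $Q_{n',m'}$‑coset, with the resulting automatic relations governed by Lemma~\ref{lemma:rhoab}. Since $c_1,c_2\in K\cup\{x_1,\dots,x_k\}$, exactly one of $c_1-c_2$, $c_2-c_1$ is literally an $\Ldist$‑polynomial (the case of two variables giving $\pi^0(x_i-x_j)$, a variable and a constant giving $\pi^0(x_i-a)$, two constants a constant); as order is insensitive to sign, the order conditions are genuine precell conditions, and the coset conditions are directly of the form $R_{n',m'}(c_1,c_2,\mu)$, hence quantifier‑free $\Ldist$‑definable. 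The refinement of the $Q_{n,m}$‑coset is handled by first partitioning along $Q_{n',m'}$‑cosets, which simply produces finer $\Ldist$‑cells.

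Thus the substantive work is entirely contained in Theorem~\ref{theorem:lmcell-doorsnede}, and the expected obstacle here is purely the bookkeeping of the previous paragraph: verifying that none of the manipulations in that proof escapes the restrictive class of $\Ldist$‑cells, the one delicate observation being that differences and $\pi$‑powers of centers remain $\Ldist$‑polynomials up to a sign that affects neither the order nor $Q_{n,m}$‑coset membership. Once this is checked, the proposition follows.
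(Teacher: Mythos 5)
Your proposal is correct and follows the same route as the paper, whose entire proof is the single line ``This follows from Theorem~\ref{theorem:lmcell-doorsnede}'' (relying on the preceding remark that $\Ldist$-polynomials are order-definable by Lemma~\ref{lemma:cd-defrel}). The additional bookkeeping you carry out --- checking that restricted centers remain constants or variables and that the cells produced stay in the $\Ldist$-format --- is exactly the verification the paper leaves implicit.
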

\begin{proof}
This follows from Theorem \ref{theorem:lmcell-doorsnede}.
\end{proof}
\begin{proposition}\label{prop:ldistqe}
The language $\Ldist'$ admits elimination of quantifiers for  any $(\F_q,\Z)$-field.
\end{proposition}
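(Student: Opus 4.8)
The plan is to show, by induction on the number of variables, that every quantifier-free $\Ldist'$-definable subset of $K^{k+1}$ has a projection to $K^k$ that is again quantifier-free $\Ldist'$-definable; since $\Ldist'$ is relationally defined (no function symbols beyond constants and $\pi^k$), this suffices for elimination of quantifiers. So fix a quantifier-free $\Ldist'$-formula $\psi(x,t)$ in variables $x = (x_1,\dots,x_k)$ and $t$, and consider $\exists t\, \psi(x,t)$. First I would put $\psi$ in disjunctive normal form and distribute the existential quantifier over the disjunction, so it is enough to handle a single conjunction of atomic formulas and negated atomic formulas. The atomic formulas of $\Ldist'$ are of the two shapes $\Dvierk(b_1,b_2,b_3,b_4)$, i.e. $\ord(b_1-b_2) < \ord(b_3-b_4) + k$, and $R_{n,m}(b_1,b_2,b_3)$, i.e. $b_2 - b_1 \in b_3 Q_{n,m}$, where each $b_i$ is a term: a constant, a variable, or $\pi^{\ell}$ times a difference of two of these. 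The point is that every atomic condition in which $t$ appears can be rephrased, after the obvious substitutions, in terms of $\ord(t - c)$ and the residue class of $t-c$ modulo $Q_{n,m}$ for finitely many centers $c$ (each a constant or one of the $x_i$, coming from the terms $\pi^\ell(t - x_i)$ or $\pi^\ell(t-a)$ appearing in $\psi$); negations of such conditions are handled by Lemma \ref{lemma:cd-defrel}, which tells us $\geqslant$, $=$, etc. are all available, and by noting that $t-c \notin \lambda Q_{n,m}$ is a finite disjunction of $t - c \in \lambda' Q_{n,m}$ over coset representatives $\lambda'$.

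Next I would reduce to a single center. If two distinct centers $c_1, c_2$ occur, then by the trichotomy used in the proof of Theorem \ref{theorem:lmcell-doorsnede} — comparing $\ord(t-c_1)$ with $\ord(c_2-c_1)$, which is a precell condition — one splits the formula into finitely many pieces on each of which $\ord(t-c_2)$ and $\rho_{n,m}(t-c_2)$ are controlled by $\ord(t-c_1)$, $\rho_{n,m}(t-c_1)$ and quantifier-free data in $x$ alone, via Lemma \ref{lemma:rhoab}. Iterating, I arrive at a formula in which $t$ occurs only through $\ord(t-c)$ and $\rho_{n,m}(t-c)$ for a single fixed center $c$ (constant or a variable) and a single pair $n,m$ (take common multiples). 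At this stage the conjunction describes exactly an $(\Ldist, \Deltadist)$-definable cell $C_c^D(a_1,a_2,\lambda) \subseteq K^{k+1}$ in the sense of Definition \ref{def:cell}, where $a_1, a_2$ are $\Ldist$-polynomials and $D \subseteq K^k$ is a precell — using Proposition \ref{cd-prop:ldistcd} to merge the finitely many cell conditions coming from the original atoms into a genuine finite union of $\Ldist$-cells.

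Now I apply Lemma \ref{cd-lemma:qe}: the projection of an $\Ldist$-cell is a finite union of precells, \emph{provided} that for every $l$ the set $\{x : \ord a_1(x) \equiv l \bmod n\}$ is a finite union of precells. This is the hypothesis I must discharge, and I expect it to be the main obstacle. Here $a_1$ is an $\Ldist$-polynomial, so $a_1(x)$ is either a constant, or $\pi^k(x_i - a)$, or $\pi^k(x_i - x_j)$; in each case $\ord a_1(x) \equiv l \bmod n$ is equivalent to a condition of the form $x_i - \alpha \in \bigcup \mu Q_{n,n}$ (a finite union over the finitely many cosets of $Q_{n,n}$ in $K^\times$ of appropriate order), which is literally a boolean combination of $R_{n,n}$-atoms and hence a precell. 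So the hypothesis holds, Lemma \ref{cd-lemma:qe} applies, and $\exists t\, \psi(x,t)$ is a finite union of precells in $K^k$. A precell is by definition quantifier-free $\Ldist$-definable (indeed $\Ldist'$-definable, since precell conditions are exactly $\Dvierk$- and $R_{n,m}$-atoms). The last thing to check is the base case $k = 0$: an $\exists t$ in front of a quantifier-free formula in $t$ alone reduces, by the same analysis with no parameters, to a statement about the value group $\Gamma_K$ and the finite quotients $(\cO_K \bmod \pi^m)^\times$, both of which are finite or elementarily equivalent to $(\Z, +, <)$, where the relevant projection statements are true sentences; so $\exists t\,\psi(t)$ is equivalent to $\top$ or $\bot$. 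This completes the induction.

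One should also remark that the terms $b_i$ in the original atoms may involve $\pi^\ell$ times differences of two \emph{variables} $x_i - x_j$ with $i,j \neq$ the quantified index; these simply become part of the precell data $D$ and require no special treatment, since the whole argument only ever eliminates one variable at a time and treats all others as parameters.
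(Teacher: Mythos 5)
Your proposal is correct and follows essentially the same route as the paper's proof: both reduce quantifier-free $\Ldist'$-definable sets to finite unions of $\Ldist$-cells (using Proposition \ref{cd-prop:ldistcd} for intersections and the trichotomy on $\ord(t-c_1)$ versus $\ord(c_1-c_2)$ to handle atoms with two centers), verify that $\ord a_1(x)\equiv l \bmod n$ is a precell condition via the finitely many cosets of $Q_{n,m}$, and then conclude by Lemma \ref{cd-lemma:qe}. The only differences are organizational (your explicit DNF/single-center bookkeeping and base case versus the paper's direct check of the atomic sets $\Dvier_r$ and $R_{n,m}$), not mathematical.
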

\begin{proof}
It is clear that any $\Ldist$-cell (and any $\Ldist$-precell) is quantifier free definable in $\Ldist'$. Moreover, for any $\Ldist$-polynomial $f(x)$, the relation $\ord f(x) \equiv l \mod n$ can be partitioned in a finite number of precells. (Indeed, this relation can be written as a finite disjunction of relations $x_i -a \in \lambda Q_{n,m}$ or $x_i-x_j \in \lambda Q_{n,m}$.) \\\\Since the requirements of Lemma \ref{cd-lemma:qe} are satisfied, it is now sufficient to show that any set that is quantifier free definable in $\Ldist'$ can be partitioned as a finite union of $\Ldist$-cells.

By Proposition \ref{cd-prop:ldistcd}, we only need to check 
that the sets (and complements of these sets)\[\{x \in K^k \ | \ \Dvier_r(g_1,g_2, g_3, g_4)\} \quad \text {and} \quad \{ x \in
K^k \ | \ R_{n,m}(g_1, g_2, g_3)\},
\]with $g_i(x) \in \{x_1, \ldots, x_k\} \cup K$ and $r\in \Z$ can be partitioned as a finite union of $\Lm_{\text{dist}}$-cells. \\The fact that $R_{n,m}(x,y,z)$ is equivalent to
\[\bigvee_{[\lambda] \, \in \, \Lambda_{n,m}}R_{n,m}(x,y,\lambda) \wedge R_{n,m}(0,z,\lambda),\]  implies
that $\{ x \in K^3 \ | \ R_{n,m}(x,y,z)\}$ can be written as a union of $\Lm_{\text{dist}}$-cells, by
Theorem \ref{theorem:lmcell-doorsnede}. The complement of this set
can also be written as a union of disjoint $\Lm_{\text{dist}}$-cells, since
$\{(x,y) \in K^2 \ | \ \neg R_{n,m}(x,y,\lambda)\}$ can be
written as a finite union of (disjoint) sets of the form $\{(x,y) \in K^2 \ | \ R_{n,m}(x,y,\mu)\}$.\\\\
To complete the proof it suffices to check  that the set
\[A:=\{(x,t) \in K^{k+1} \ | \ \ord(t-c_1) < \ord
\pi^n(t-c_2)\},\]with $c_1, c_2 \in \{ x_1, \ldots, x_k\} \cup K$,
can be partitioned as a finite union of cells. We may
suppose that $c_1 \neq c_2$. Partition $K^{k+1}$ in the
following way:
\begin{eqnarray}K^{k+1} &=& \{(x,t) \in K^{k+1}\ |\
\ord(t-c_1) > \ord (c_1-c_2) \}\nonumber\\
&  & \cup \ \{(x,t) \in K^{k+1}\ |\
\ord(t-c_1) < \ord (c_1-c_2) \}\label{partitionqp}\\
& & \cup \ \{(x,t) \in K^{k+1}\ |\ \ord(t-c_1) = \ord (c_1-c_2)
\}.\nonumber
\end{eqnarray}
Since $A = A \cap K^{k+1}$, we can write $A$ as a union of sets
on which one of the conditions in (\ref{partitionqp}) holds. For
example, on
\[B = A \cap\{(x,t) \in K^{k+1}\ |\ \ord(t-c_1) >
\ord (c_1-c_2) \},\] we have that $\ord(t-c_2) = \ord(c_1-c_2)$,
and therefore $B$ is equal to the set \[ B = \{(x,t) \in
K^{k+1} \ | \ord(c_1-c_2) < \ord(t-c_1) < \ord
\pi^n(c_1-c_2)\}.\] It is clear that $B$ can be partitoned as a finite number of $\Ldist$-cells. The other cases are similar.
\end{proof}

Note our strategy: For the given language $\Lm$, we first try to find a suitable set $\Delta$ of order-definable functions, such that we get a system of weak $(\Lm, \Delta)$-definable cells.
To obtain a definitional expansion that has QE, we add symbols to $\Lm$ such that for each $f \in \Delta$, the relations \[\ord f(x) < t +k \quad \text{and} \quad \ord f(x) \equiv l \mod n\] are quantifier free definable in the extended language $\Lm'$. To obtain QE for $\Lm'$, it is then sufficient to show that quantifier free $\Lm'$-definable sets can be partitioned as a finite number of $(\Lm,\Delta)$-cells. (Unfortunately, this last step may require quite a lot of work.) 

\section{Cell decomposition and definable (Skolem) functions } \label{section:deffun}
\subsection{Definable functions}
The example of $\Ldist$-definable sets illustrates how we can use cell decomposition to obtain quantifier elimination results for a structure $(K, \Lm)$. Cell decomposition results also provide a lot of information concerning the definable functions of a given structure. 
%
For instance,  
 all $\Ldist$-definable functions must have the following form (and thus these structures have only trivial definable functions):

\begin{lemma}\label{lemma:deffun-ldist}
Let $f: A \subseteq K^k \to K^l$ be an $\Lm_{\text{dist}}$-definable
function. There exists a finite partion of $A$ in $\Ldist$-cells such that
on each cell $C$ the function $f$ has the form
\[f_{|C} : C \to K^l: x \mapsto (f_1(x), f_2(x), \ldots,
f_n(x)),\] where $f_i(x)$ is either one of the variables $\{x_1, \ldots ,x_k\}$ or a constant from $K$.
\end{lemma}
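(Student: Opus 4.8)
The plan is to reduce the statement to a one-variable analysis by applying the cell decomposition machinery to the graph of $f$. First I would note that it suffices to treat the case $l=1$; a function into $K^l$ is handled by intersecting the finitely many cell decompositions obtained coordinate by coordinate, using Proposition \ref{cd-prop:ldistcd} to refine into a common partition. So assume $f\colon A\subseteq K^k\to K$ is $\Ldist$-definable, and consider its graph $\Gamma_f\subseteq K^{k+1}$, which is an $\Ldist$-definable subset. The key input I would invoke is an $\Ldist$-cell decomposition of $\Gamma_f$: write $\Gamma_f$ as a finite disjoint union of $\Ldist$-cells $C_c^D(a_1,a_2,\lambda)$. (If the cell decomposition theorem for $\Ldist$-definable subsets of $K^{k+1}$ is not yet available at this point in the paper, then I would instead argue via the quantifier-elimination result of Proposition \ref{prop:ldistqe}, expressing $\Gamma_f$ as a quantifier-free $\Ldist'$-formula and partitioning the resulting set into $\Ldist$-cells as in the proof of that proposition.)

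Next I would exploit the fact that $\Gamma_f$ is a graph: for each $x\in D$ there is exactly one $t$ with $(x,t)$ in the cell. A cell $C_c^D(a_1,a_2,\lambda)$ with $t-c(x)\in\lambda Q_{n,m}$ and order constraints $\ord a_1(x)\ \square_1\ \ord(t-c(x))\ \square_2\ \ord a_2(x)$ can contain at most one point in its fiber over a given $x$ only if the fiber is a single point. Since $Q_{n,m}$ is an open subgroup of infinite order in $K^\times$ whenever $\lambda\neq 0$, a set of the form $\lambda Q_{n,m}$ intersected with a nonempty order-band is always infinite; and even the "$\square_i=$ no condition" degenerate cases give infinitely many points unless the band is empty. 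Hence the only way a cell can have singleton fibers is if $\lambda=0$, forcing $t-c(x)=0$, i.e. $t=c(x)$ on that cell. Therefore on each cell $C$ of the decomposition the function $f$ agrees with its center $c(x)$, and by the definition of an $\Ldist$-cell the center is either a constant from $K$ or one of the variables $x_1,\dots,x_k$. This yields the claimed form with $n=1$, and the general $l$ case follows by the coordinatewise refinement mentioned above, relabelling $f_i$ appropriately.

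The main obstacle I anticipate is the careful bookkeeping in the reduction to $l=1$ and in ensuring the resulting common refinement is still a partition into genuine $\Ldist$-cells rather than mere precells or boolean combinations thereof; this is where Theorem \ref{theorem:lmcell-doorsnede} and Proposition \ref{cd-prop:ldistcd} do the real work, since intersecting cells with different centers must be re-expressed as cells whose centers are restrictions of the originals. A secondary point requiring care is the degenerate-cell argument: one must check that when $\square_1$ or $\square_2$ is "no condition" and $\lambda\neq 0$, the fiber $\{t : t-c(x)\in\lambda Q_{n,m},\ \ord a_1(x)\ \square_1\ \ord(t-c(x))\ \square_2\ \ord a_2(x)\}$ is still infinite whenever nonempty, so that such cells cannot appear in the decomposition of a graph unless $\lambda=0$. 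This is immediate once one observes that $\lambda Q_{n,m}$ meets every sufficiently high (or, depending on the open/unbounded side, every sufficiently deep) order class, but it is the one place where the specific structure of the sets $Q_{n,m}$ is genuinely used.
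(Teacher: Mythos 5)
Your proposal is correct and follows essentially the same route as the paper's own proof: reduce to $l=1$, decompose the graph of $f$ into $\Ldist$-cells via the quantifier elimination/cell decomposition results, and use the uniqueness of fibers of a graph to force $\lambda=0$ and hence $t=c(x)$, with the center being a variable or constant by definition of an $\Ldist$-cell. Your added justification that a cell with $\lambda\neq 0$ has infinite fibers (since $\lambda Q_{n,m}$ is open and meets order classes in infinite sets) is a detail the paper leaves implicit, but the argument is the same.
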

\begin{proof}
 First we note that $f$ can be written as
\[f: A \subseteq K^ k \to  K^l: x \mapsto (f_1(x), \ldots, f_n(x)),\]
where the coordinate functions $f_i : K^m \to K$ are all $\Lm_{\text{dist}}$-definable functions. 
Therefore it is enough to prove the lemma for the case $l =1$.
A function $f: A \subseteq K^k \to K$ is $\Lm_{\text{dist}}$-definable if and only if
\[\mathrm{Graph} f = \{(x,t) \in A \times K \ | t = f(x) \} \] is an $\Lm_{\text{dist}}$-definable set.
This means there exists a finite partition of Graph $f$ in
$\Lm_{\text{dist}}$-cells $G$ of the form
\[  \{(x,t) \in D\times K \ | \ \ord a_1(x) \ \square_1\  \ord( t-c(x) )\
\square_2\ \ord a_2(x),\ t-c(x) \in \lambda Q_{n,m} \},\]with  $c(x)$ a constant from $K$ or
a variable from $\{x_1, \ldots, x_k\}$. But since $f$ is a function, for each $x\in D$
there must be a unique $t$ such that $(x,t) \in G$. This
uniqueness condition implies that $\lambda=0$, and thus $G$ must
have the form
\[G = \{ (x,t) \in D \times K \ | \ t = c(x) \}.\]
\end{proof}
\noindent
\subsection{Cell decomposition and Skolem functions}

 When studying definable functions, another natural question to ask is whether a language $\Lm$ has definable Skolem functions: for a given definable function $f: X\to Y$, does there exist a definable function $g: f(X) \to X$ such that $f\circ g = \text{Id}_{f(X)}$? 

In the $P$-minimal context, Mourgues showed that a $P$-minimal structure $(K,\Lm)$ has definable Skolem functions if and only if  the structure allows cell decomposition, using a notion of cells similar to what we called `strong cells', i.e. using cells that are defined using only definable functions.
When $K$ is $p$-adically closed, this result can be extended to extensions of $(K, \Ldist)$, if scalar multiplication by elements of the following set is definable:

\begin{lemma-def}
 The algebraic closure $\overline{\Q}^K$ of $\Q$ in $K$ is the field containing all elements of $K$ that are algebraic over $\Q$.\\ 
 The field $\overline{\Q}^K$ has the same residue field as $K$. Let $\ord_K: K \to \Gamma_K$ be the valuation on $K$. There exists $\pi \in \overline{\Q}^K$ such that $\ord_K(\pi)=1$.
\end{lemma-def}
\begin{proof}
 It is easy to see that $\overline{\Q}^K$ has the same residue field as $K$, since each $x \in \F_K$ is a simple root of $X^{q_K}-X$. The claim follows then by Hensel's Lemma.
 That $\overline{\Q}^K$ contains an element $\pi$ with $\ord_K(\pi)=1$, follows by Lemma 3.5 of \cite{pre-ro-84}.
 \end{proof}

\begin{theorem}
Let $K$ be a $p$-adically closed field. Suppose that $\Lm \supseteq \Ldist$ and that multiplication by constants from $\overline{\Q}^{K}$ is definable in $(K, \Lm)$. The structure $(K,\Lm)$ admits strong cell decomposition
if and only if $(K, \Lm)$ has definable Skolem functions.
\end{theorem}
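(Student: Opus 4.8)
The plan is to prove the two implications separately, with the "strong cell decomposition $\Rightarrow$ definable Skolem functions" direction being the easy one and the converse being where the real work lies. For the forward direction, suppose $(K,\Lm)$ admits strong cell decomposition. Given a definable function $f\colon X\to Y$ with $X\subseteq K^{k+1}$, consider its graph inside $K^{k+2}$ (or handle coordinates one at a time so that the "last" coordinate plays a distinguished role), and apply strong cell decomposition to the definable set $X$ viewed as living over $Y=f(X)$. On each cell $C_c^D(a_1,a_2,\lambda)$ the fibre over a point of $D$ is either a singleton $\{c(x)\}$ (when $\lambda = 0$) or a "ball-like" set $\{t : \ord a_1(x)\,\square_1\,\ord(t-c(x))\,\square_2\,\ord a_2(x),\ t-c(x)\in\lambda Q_{n,m}\}$; in the first case we take $x\mapsto c(x)$, and in the second case we need to exhibit a definable choice of a point in each such set. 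Here is where definability of multiplication by elements of $\overline{\Q}^K$ enters: using the element $\pi\in\overline{\Q}^K$ with $\ord_K(\pi)=1$ supplied by the Lemma-Definition, together with the fact that $K$ is $p$-adically closed so that the value group has definable integer points bounded between $\ord a_1(x)$ and $\ord a_2(x)$, one can write down an explicit representative of the form $c(x)+\lambda\pi^{jn}$ for a definably-chosen exponent $j$; since $c$ and the $a_i$ are $\Lm$-definable (this is the point of \emph{strong} decomposition) and $\lambda$ is a constant, this gives a definable section. Partitioning $X$ into finitely many such cells and gluing the finitely many partial sections yields the desired definable Skolem function.

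For the converse, assume $(K,\Lm)$ has definable Skolem functions, and let $X\subseteq K^{k+1}$ be an arbitrary $\Lm$-definable set; we must partition it into strong $(\Lm,\Delta)$-cells for a suitable set $\Delta$ of $\Lm$-definable functions. The strategy is to mimic Mourgues' argument for the $P$-minimal case. For each $x\in K^k$, the fibre $X_x\subseteq K$ is a definable subset of $K$; since $(K,\Lm)$ is a reduct of a $p$-adically closed (hence $P$-minimal) field, $X_x$ has a controlled shape — it is a finite boolean combination of the basic "Denef-type" pieces. The idea is to use the Skolem functions to uniformly pick out, as $x$ varies, the relevant \emph{centers} and \emph{boundary data} of these fibres, thereby promoting the merely order-definable data produced by the weak cell decomposition (which is available by the cell decomposition results of Section \ref{section:celdec}, since $\Lm\supseteq\Ldist$) to genuinely $\Lm$-definable functions. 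Concretely: first apply weak cell decomposition to $X$ to write it as a union of weak $(\Lm,\Delta_0)$-cells with order-definable $a_i(x)$ and definable centers $c(x)$; then, for each such cell, the "defect" preventing it from being strong is precisely that $a_1,a_2$ are only order-definable. One uses a definable Skolem function applied to the definable set $\{(x,t) : \ord t = \ord a_i(x),\ \text{(appropriate extra conditions)}\}$ to produce an $\Lm$-definable function $\tilde a_i(x)$ with $\ord\tilde a_i(x)=\ord a_i(x)$; replacing $a_i$ by $\tilde a_i$ (which changes the cell only up to a further finite partition into cells using larger $Q_{n,m}$, handled via Lemma \ref{lemma:rhoab}) converts the weak cell into a finite union of strong cells.

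The main obstacle will be this last step: showing that the order-data of a weak cell can be replaced by honestly definable functions using only the Skolem-function hypothesis plus definability of $\overline{\Q}^K$-scalar multiplication, and that this replacement is compatible with the $Q_{n,m}$-congruence condition (so that one still ends up with finitely many cells, not infinitely many). This requires carefully setting up, for each boundary function $a_i$, a definable set whose fibres are nonempty and lie at the correct valuation, feeding it to the Skolem function, and then invoking Lemma \ref{lemma:rhoab} to re-express the translated congruence conditions $t-c(x)\in\lambda Q_{n,m}$ after shifting by the (now definable) representative; the bookkeeping of which residue classes modulo finer $Q_{N,N}$ survive is the delicate part. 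A secondary subtlety is making sure that "definable Skolem functions" — a priori a statement about arbitrary definable $f$ — can be invoked for the specific auxiliary functions we build; but since those auxiliary sets are themselves $\Lm$-definable, this is automatic. Once the replacement lemma is in place, finiteness of the resulting partition follows from the finiteness already present in Theorem \ref{theorem:lmcell-doorsnede} and Lemma \ref{cd-lemma:qe}, and the two implications together give the equivalence.
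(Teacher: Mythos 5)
Your overall architecture matches the paper's (two directions, Mourgues-style argument for ``Skolem $\Rightarrow$ strong cells'', explicit sections of cell projections for the converse, and the key observation that Skolem functions let one replace an order-definable $a_i$ by a genuinely definable $\tilde a_i$ with $\ord \tilde a_i(x) = \ord a_i(x)$). But there are two concrete gaps. First, in the direction ``strong cell decomposition $\Rightarrow$ Skolem functions'', your proposed section $t(x) = c(x) + \lambda\pi^{jn}$ with a ``definably-chosen exponent $j$'' is not a legitimate definable function: $j$ must vary with $x$, and in these weak languages there is no mechanism for defining $x \mapsto \pi^{j(x)n}$ --- producing a definable function with prescribed valuation is precisely the difficulty. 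The paper's construction avoids this by partitioning $\pi_x(C)$ according to the coset $\mu Q_{n,m}$ containing the (now $\Lm$-definable) boundary function $a(x)$ and setting $t(x) = \frac{\lambda}{\mu}\,a(x)$ or $t(x) = \frac{\lambda}{\pi^n\mu}\,a(x)$, i.e.\ a fixed scalar multiple of an already definable function; this is exactly where the hypothesis on scalar multiplication is consumed. You have the right ingredients but assemble them into an undefinable object.

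Second, in the converse direction you ``first apply weak cell decomposition to $X$,'' citing the results of Section 2. Those results do not give weak cell decomposition for an arbitrary expansion $\Lm \supseteq \Ldist$; the introduction explicitly states that whether every such expansion admits weak cell decomposition is an open question, so this step is circular. The paper instead obtains cell decomposition from scratch: by a compactness argument \`a la Mourgues, each fibre $S'_y$ of a definable set coincides with a fibre $S_{z}$ of a quantifier-free $\Ldist$-definable family, the Skolem hypothesis yields a definable parameter map $y \mapsto f(y)$ with $S'_y = S_{f(y)}$, and one then pulls back the $\Ldist$-cell decomposition of $S$ (Proposition \ref{prop:ldistqe} territory) along $f$; the opening observation then guarantees the result is a \emph{strong} decomposition. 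Your ``replacement lemma'' is the right tool, but it must be applied to a decomposition whose existence has been established by this route, not assumed.
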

In fact, we actually obtain strong cell decomposition using continuous functions, since every structure is a reduct of a $P$-minimal structure.
\\\\
\begin{proof}
First assume that $(K, \Lm)$ has definable Skolem functions. 
This implies that, if $(K,\Lm)$ has cell decomposition, say using $(\Lm, \Delta)$-cells, then it admits strong cell decomposition. Indeed, 
let $f: K^k \to K$ be a function in $\Delta$. Since $f$ is order-definable in $\Lm$, the following set is $\Lm$-definable:
\[ A:= \{ (x,t) \in K^{k+1} \ | \ \ord f(x) = t \}. \]
Consider the projection map \ 
\( \pi_x: A\to K^k: (x,t) \mapsto x.\)
Since $(\Lm,K)$ has definable Skolem functions,  there exists a definable function $g: \mathrm{Im}\, \pi_x \to A: x \mapsto (x,a(x))$ such that $(\pi_x \circ f)(x) = x$ for all $ x \in \mathrm{Im}\, \pi_x$. But then $a(x)$ is a definable function, such that for each $x \in K^k$, \(\ord f(x) = \ord a(x).\)

We can now show that $(K, \Lm)$ has strong cell decomposition, using essentially the same proof as that of Mourgues. For this reason, we will only give a brief sketch of the proof, and refer the reader to \cite{mou-09} for details.
Let $\Lm$ be an extension of $\Ldist$, and $S' \subseteq K^{n+1}$ an $\Lm$-definable set, defined by a formula $\phi(y,x)$. As in \cite{mou-09}, using a compactness argument, it can be shown that there exists a quanifierfree $\Ldist$-formula $\psi(z,x)$ such that
\begin{equation} K \models \forall y \exists z \forall x (\phi(y,x) \Leftrightarrow \psi(z,x)). \label{eq:mouskolem}\end{equation}
Write $\pi_n: K^{n+1} \to K^n$ for the projection onto the first $n$ coordinates.  As in Lemma 3.3 of \cite{mou-09},  it can follows from \eqref{eq:mouskolem} that, if $(K,\Lm)$ has definable skolem functions, there exists $m$, an $\Ldist$-definable subset $S$ of $K^{m+1}$ and an $\Lm$-definable function $f: \pi_n(S') \to K^m$ such that for any $y \in \pi_n(S')$,
\[ \{x \in K \mid (y,x) \in S'\} = \{ x \in K \mid (f(y),x) \in S \}.\]
Now use the same reasoning as in the proof of Theorem 3.5 of \cite{mou-09}, reducing to $\Ldist$-cell decomposition instead of semi-algebraic cell decomposition. The only thing that needs some care is to check that the cell decomposition obtained is a strong decomposition, but this is an immediate consequence of the observation at the start of this proof.

Next, assume that every function in $\Delta$ is $\Lm$-definable.
If suffices to check that 
given an ($\Lm, \Delta$)-cell $C$ and the projection map $\pi_x: C \subset K^{l+1} \to K^l$,
there exists a definable function $g: \pi_x(C) \to C$ such
that $\pi_x \circ g = \mathrm{Id}_{\pi_x(C)}$.
\\
If the cell $C$ has a center $c(x) \neq 0$, we first apply a
translation
\[C\to C': (x,t) \mapsto (x,t-c(x)),\]
to a cell $C'$ with center $c'(x)=0$. Since this translation is
bijective, it is invertible. Therefore the problem is reduced to the following. 
Let $C$ be a cell of the form
\[C = \{(x,t) \in D \times K \ | \ \ord b(x)\,\square_1 \,\ord
t \,\square_2 \,\ord a(x), \ t \in \lambda Q_{n,m} \},
\] where $a(x), b(x)$ are $\Lm$-definable functions and
$D$ is an $(\Lm,\Delta)$-precell. 
We must show that there exists a definable function $g: \pi_x(C) \to C$ such
that $\pi_x \circ g = \mathrm{Id}_{\pi_x(C)}$. \\\\
Given $x \in \pi_x(C)\subseteq D$, we have to find $t(x)$ such that
$(x,t(x))$ satisfies the conditions
\begin{eqnarray}
 \ord b(x)\ \square_1 \ \ord
t(x) \ \square_2 \ \ord a(x) \label{c1}\\
t(x) \in \lambda Q_{n,m} \label{c2}
\end{eqnarray}
If $\lambda = 0$, put $g(x) = (x,0)$. From now on we assume that $\lambda \neq 0$.\\
If $\square_1 = \square_2 =$ `no condition', we can simply put $g(x)
=(x, \lambda).$\\
If $\square_2 = $ $<$, we can define $g$ as follows. First partition
$\pi_x(C)$ in parts $D_{\mu}$, such that
\[ D_{\mu} = \{x\in \pi_x(C) \ | \ a(x) \in \mu Q_{n,m} \}.\]
(Note: if $\mu = 0$, we can reduce to the cases were $\square_2$ = `no condition'.)
Our strategy is based on the fact that for every $x \in D$, there
exists $k \in \Z$ such that $k$ satisfies
\[\ord b_1(x) \ \square_1 \ord \lambda + kn < \ord a(x). \]
Restricting to a set $D_{\mu}$, we construct an element $t(x)$
with order as close as possible to $\ord a(x)$. This ensures that
$t(x)$ satisfies (\ref{c1}).
The definiton of $g$ on $D_{\mu}$ will depend on
the respective orders of $\lambda$ and $\mu$.
\begin{itemize}
\item If $\ord \lambda < \ord \mu$, we can define $g_{|D_{\mu}}$ as
\(g_{|D_{\mu}}: D_{\mu} \to C: x \mapsto \left(x,\frac{\lambda}{\mu} a(x)\right).\)
This means that we put $t(x) = \frac{\lambda}{\mu} a(x)$. Clearly
 $t(x) \in \lambda Q_{n,m}$. Also, since $-n < \ord
(\frac{\lambda}{\mu}) < 0$, we have that $0<\ord
\frac{a(x)}{t(x)}<n$, and thus condition (\ref{c1}) must be
satisfied.
\item If $\ord \lambda \geqslant \ord \mu$, put
\(g_{D_{\mu}}: D_{\mu} \to C: x \mapsto \left(x,\frac{\lambda}{\pi^n\mu} a(x)\right).\)
\end{itemize}
If  $\square_1 =$ $<$ and $\square_2 =$ `no condition', we choose
$t(x)$ with order as close as possible to $\ord b(x)$. More
specifically, if $\ord \lambda \leqslant \mu$, define $g$ as
\(g_{D_{\mu}}: D_{\mu} \to C: x \mapsto \left(x,\frac{\lambda \pi^n}{\mu}
b(x)\right)\), and if $\ord \lambda > \ord \mu,$ put
\(g_{D_{\mu}}: D_{\mu}\to C: x \mapsto \left(x,\frac{\lambda}{\mu} b(x)\right).\)
\end{proof}

If we omit the condition that $K$ has to be $p$-adically closed, we obtain a weaker version of this theorem: we can no longer be assured that definable skolem functions imply the existence of cell decomposition, but if such a decomposition exists, it will be a strong decomposition. 

As before, Skolem functions will only be definable if we can define multiplication by enough scalars. 
Write $\PP_K$ for the prime field of $K$. Choose a generator $\pi_K$ of $R_K$. If the residue field $\F_K = \F_p[\overline{a_1}, \ldots, \overline{a_d}]$,  choose  elements $a_i \in R_K$ such that $\ord(a_i - \overline{a_i}) >0$. Then put $\K_K:=\PP_K[\pi_k, a_1, \ldots, a_{d}]$.

\begin{corol}
 Suppose that $\Lm \supseteq \Ldist$ and that multiplication by constants from $\K_K$ is definable in $(K, \Lm)$. \\Let $\Delta$ be a collection of order-definable functions such that the structure $(K, \Lm)$ has cell decomposition using $(\Lm, \Delta)$-cells. The following statements are then equivalent:
\begin{enumerate}
\item There exists a collection of $\Lm$-definable functions $\Delta'$ such that $(K, \Lm)$ has cell decomposition using $(\Lm, \Delta')$-cells, 
\item The structure $(K, \Lm)$ has definable Skolem functions.
\end{enumerate}
\end{corol}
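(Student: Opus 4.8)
The plan is to deduce the corollary from the preceding theorem and its weak-field variant by exhibiting, for a $p$-adically closed field $K$, a dictionary between the rather large scalar set $\overline{\Q}^K$ used in the theorem and the small explicitly-generated ring $\K_K = \PP_K[\pi_K, a_1,\ldots,a_d]$ appearing here. The implication (1) $\Rightarrow$ (2) is the easier direction: if $(K,\Lm)$ has cell decomposition using $(\Lm,\Delta')$-cells with every $f\in\Delta'$ being $\Lm$-definable, then the second half of the theorem's proof (the part headed "Next, assume that every function in $\Delta$ is $\Lm$-definable") produces, for every $(\Lm,\Delta')$-cell $C$, a definable section $g:\pi_x(C)\to C$ of the projection; chaining these sections over a finite cell partition of the graph of any definable $f:X\to Y$ gives a definable Skolem function for $f$. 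The only point to check is that the scalar multiplications used in that construction — multiplication by $\lambda/\mu$, by $\pi^n$, by $\lambda\pi^n/\mu$, etc. — all lie in the multiplicative closure of $\K_K$ together with the $Q_{n,m}$-representatives, which is why the hypothesis is stated with $\K_K$ rather than just $\{\pi\}$.

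For (2) $\Rightarrow$ (1), I would first invoke the hypothesis that $(K,\Lm)$ already has cell decomposition using $(\Lm,\Delta)$-cells for some collection $\Delta$ of order-definable functions. Then the argument at the very start of the theorem's proof applies verbatim: for each $f\in\Delta$ the set $A=\{(x,t)\mid \ord f(x)=t\}$ is $\Lm$-definable, so a definable Skolem function for the projection $\pi_x:A\to K^k$ yields an $\Lm$-definable function $a(x)$ with $\ord a(x)=\ord f(x)$ for all $x$. Collecting these $a(x)$ into a set $\Delta'$ and replacing each occurrence of $f$ by the corresponding $a$ in the cell descriptions gives cell decomposition using $(\Lm,\Delta')$-cells with $\Delta'$ consisting of $\Lm$-definable functions. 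This replacement is legitimate because the defining conditions of a cell only ever refer to $f$ through the relations $\ord f(x)\,\square\,\ord t+\ell$ and $\ord f(x)\equiv l \bmod n$, which depend on $f$ only up to a function with the same order everywhere — so one should spell out that the condition $t-c(x)\in\lambda Q_{n,m}$ is unaffected and the order comparisons are preserved, perhaps after refining the partition so that the residue class of $\ord f(x)$ modulo $n$ is fixed on each piece (for which one uses that $R_{n,m}$ is in the language).

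The reduction of the scalar requirement from $\overline{\Q}^K$ down to $\K_K$ is where I expect the real work to sit, and it is the step I would flag as the main obstacle. In the theorem, multiplication by all of $\overline{\Q}^K$ was assumed, but that was only used to run Mourgues' compactness argument reducing an arbitrary $\Lm$-formula to a quantifier-free $\Ldist$-formula with $\overline{\Q}^K$-parameters, and to perform the scalar rescalings in the Skolem-section construction. The point of $\K_K$ is that $\PP_K[\pi_K,a_1,\ldots,a_d]$ already surjects onto the residue field $\F_K$ and contains an element of value $1$, so every $\Ldist$-cell can be rewritten, after a further finite partition fixing angular components modulo a suitable $\pi_K^m$, using only parameters whose "shape" ($\ord$ and $\ac_m$) is realised by an element of $\K_K$ times a power of $\pi_K$ times a $Q_{n,m}$-unit; multiplication by the former is in the language by hypothesis, and the latter is absorbed into the $\lambda Q_{n,m}$ bookkeeping via Lemma \ref{lemma:rhoab}. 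I would therefore structure this part as: (i) observe $\F_K=\F_p[\overline{a_1},\ldots,\overline{a_d}]$ and $\ord\pi_K=1$ make $\K_K$ a set of representatives rich enough for all residues and all values; (ii) show any $\Ldist$-cell's scalar data can be normalised to $\K_K$-scalars after refining by $Q_{2kn,2kn}$-classes as in Theorem \ref{theorem:lmcell-doorsnede}; (iii) conclude that the Skolem-section construction of the theorem's proof goes through with $\K_K$ in place of $\overline{\Q}^K$. If the normalisation in (ii) turns out to be delicate, the fallback is simply to remark that $\K_K$ has the same residue field and contains a uniformiser, hence $\overline{\Q}^K$ is an elementary extension-free enlargement that adds no new definable sets of the relevant kind, so the theorem applies to $(K,\Lm)$ as an $\Lm$-structure once multiplication by $\K_K$ is present.
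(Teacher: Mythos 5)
Your first two paragraphs are exactly the paper's intended argument: the corollary is stated without a separate proof precisely because it consists of the two halves of the preceding theorem's proof with the Mourgues compactness step deleted (that step is the only place $p$-adic closedness enters, and it is replaced here by the hypothesis that an $(\Lm,\Delta)$-cell decomposition already exists), so your (2)$\Rightarrow$(1) via the order-matching functions $a(x)$ and your (1)$\Rightarrow$(2) via the explicit sections $x\mapsto(x,\tfrac{\lambda}{\mu}a(x))$ etc.\ are the right reductions. The ``main obstacle'' you flag in your third paragraph, however, is a non-issue: the corollary is not obtained by shrinking the scalar set from $\overline{\Q}^K$ to $\K_K$, since both hypotheses play the same purely local role of guaranteeing definable multiplication by enough $Q_{n,m}$-coset representatives ($\K_K$ contains a uniformizer and surjects onto every $\cO_K\bmod\pi_K^m$, and inverses of definable scalar multiplications are definable), so no renormalisation of $\Ldist$-cells and no appeal to $\overline{\Q}^K$ is needed; your ``fallback'' remark would in any case be unavailable, as $\overline{\Q}^K$ need not have the stated properties when $K$ is not $p$-adically closed.
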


\noindent The condition that multiplication by constants from $\K_K$  be definable in $(K, \Lm)$, is needed: for example the structure $(K; +,-, D^{(4)}, \{R_{n,m}\}_{n,m})$ does not have definable Skolem functions for most $(\F_q, \Z)$-fields because we cannot define multiplication by enough scalars (in this structure, scalar multiplication is only definable for elements of $\PP_K$).

%

\section{Subaffine structures}\label{section:subaffine}
In this section we study some expansions of the language $\Ldist$ (or rather $\Ldist'$ as we would like to achieve quantifier elimination whenever possible). We call these expansions sub\textbf{affine} because we will only be considering structures ($K, \Lm$)
 that are affine in the sense that there does not exist any open subset of $K^2$ on which multiplication is $\Lm$-definable. They are \textbf{sub}affine because addition should not be definable on all of $K^2$.
 \\\\
 A first, rather trivial example of such an expansion is the language we obtain by adding symbols $\overline{c}$ for the scalar multiplication $\overline{c}: x\mapsto cx$.
 \[\Lm_K := \{\overline{c}\}_{c\in K} \cup \Ldist'.\]
 Take fields $F, K$ and  $q,q' \in \N$  such that $F$ is an $(\F_{q},\Z)$-field, $K$ is an $(\F_{q'},\Z)$-field and   $F \supset K$. Define the set $\Delta_{K,F}$ to be $\bigcup_{k\in \N} \Delta_{K,F}^{k}$, where $\Delta_{K,F}^{k}$ is the following set of polynomials
 \[\Delta_{K,F}^{k} := \{ax+by \ | \ a,b \in K; x,y \in \{x_1, \ldots, x_k\}\cup F\}.\]
 It is easy to see that the structure ($F, \Lm_K$)   has cell decomposition and quantifier elimination using $(\Lm_K, \Delta_{K,F})$-cells. The proof is almost literally the same as the corresponding proof for $\Ldist$. Moreover,  the scalar multiplication functions we added are in fact the only non-trivial functions, or to be more precise:
 
\begin{lemma}\label{lemma:deffun-lscal}
Let $f: A \subseteq F^k \to F^l$ be an $\Lm_{K}$-definable
function. There exists a finite partition of $A$ in $(\Lm_K, \Delta_{K,L})$-cells such that
on each cell $C$ the function $f$ has the form
\[f_{|c} : C \to K^l: x \mapsto (f_1(x), f_2(x), \ldots,
f_l(x)),\] where $f_i(x)$ is either a constant from $F$ or $f_i(x) = ax_j$, with $a \in K$ and $x_j$ one of the variables $\{x_1, \ldots ,x_k\}$.
\end{lemma}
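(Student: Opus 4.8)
The plan is to mimic the proof of Lemma \ref{lemma:deffun-ldist} as closely as possible, replacing the class of $\Ldist$-polynomials by the polynomials in $\Delta_{K,F}$, and exploiting the quantifier elimination and cell decomposition for $(F,\Lm_K)$ that was announced just before the statement. First I would reduce to the case $l=1$: an $\Lm_K$-definable function $f\colon A\subseteq F^k\to F^l$ has coordinate functions $f_1,\dots,f_l\colon A\to F$ that are each $\Lm_K$-definable, so it suffices to understand a single $\Lm_K$-definable function $f\colon A\subseteq F^k\to F$. Then I would observe that the graph $\mathrm{Graph}\,f=\{(x,t)\in A\times F\mid t=f(x)\}$ is an $\Lm_K$-definable subset of $F^{k+1}$, hence by cell decomposition for $(F,\Lm_K)$ it partitions into finitely many $(\Lm_K,\Delta_{K,F})$-cells.

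Next I would fix one such cell $G\subseteq F^{k+1}$ occurring in the partition of $\mathrm{Graph}\,f$, say
\[
G=\{(x,t)\in D\times F\mid \ord a_1(x)\ \square_1\ \ord(t-c(x))\ \square_2\ \ord a_2(x),\ t-c(x)\in\lambda Q_{n,m}\},
\]
where $D$ is an $(\Lm_K,\Delta_{K,F})$-precell, the $a_i\in\Delta_{K,F}^k$, and $c(x)$ is the center. The crucial point is the uniqueness coming from the fact that $f$ is a function: for each $x\in D$ there is at most one $t$ with $(x,t)\in G$. As in Lemma \ref{lemma:deffun-ldist}, since $\lambda Q_{n,m}$ is (for $\lambda\neq 0$) an infinite set closed under multiplication by $Q_{n,m}$, and the order conditions $\square_i$ cut out either a half-line, a point, or all of $\Gamma_F$ worth of possible orders, the only way the fibre can be a singleton is $\lambda=0$. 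Hence $G=\{(x,t)\in D\times F\mid t=c(x)\}$, so $f$ agrees with the center function $c$ on $\pi_x(G)\cap A$.

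It then remains to identify the possible centers. By the definition of $(\Lm_K,\Delta_{K,F})$-cells (analogous to the $\Ldist$-cell definition, where the center is required to be a constant or a variable), the center $c(x)$ should here be allowed to range over the admissible ``quantifier free definable'' centers for $\Lm_K$, which — after reducing modulo the partition — are constants from $F$ or functions of the form $x\mapsto ax_j$ with $a\in K$ and $x_j$ one of the variables: these are precisely the one-variable elements of $\Delta_{K,F}$, and they are exactly the functions appearing in the statement. Refining the partition of $A$ so that on each piece $f$ equals a single such center then gives the claimed normal form. The one step I expect to require genuine care — the main obstacle — is making precise, in the definition of $(\Lm_K,\Delta_{K,F})$-cell, which functions are permitted as centers, and checking that the uniqueness argument really does force $\lambda=0$ rather than merely forcing the order-conditions to isolate a point while $\lambda\neq 0$; one must rule out the degenerate situation where $\square_1$ and $\square_2$ together pin $\ord(t-c(x))$ to a single value and that coset-plus-order combination happens to meet $\lambda Q_{n,m}$ in exactly one point, which does not happen because $\lambda Q_{n,m}$ with $\lambda\neq0$ always contains infinitely many elements of any fixed order (the residue field being finite but $1+\cM_F^m$ infinite when $\Gamma_F\neq\Z$, and in the standard case the coset still meets that order in more than one point once $m$ or $n$ is large — more carefully, one partitions further as in Lemma \ref{lemma:cd-defrel} so that the relevant coset condition alone already forces non-uniqueness unless $\lambda=0$). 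Everything else is a routine transcription of the $\Ldist$ argument. $\square$
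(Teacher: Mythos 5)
Your proposal is correct and is essentially the proof the paper intends: the lemma is stated without proof precisely because it is the verbatim transcription of the argument for Lemma \ref{lemma:deffun-ldist} (reduce to $l=1$, decompose the graph into cells, use functionality to force $\lambda=0$ so each cell is the graph of its center, and observe that the quantifier-free definable centers for $\Lm_K$ are exactly the constants and the maps $x\mapsto ax_j$ with $a\in K$). Your worry about the degenerate case is correctly resolved: for $\lambda\neq 0$ any nonempty fibre contains a whole coset $y(1+\cM_F^m)\subseteq \lambda Q_{n,m}$, which is infinite for every $(\F_q,\Z)$-field (not only when $\Gamma_F\neq\Z$), so uniqueness does force $\lambda=0$.
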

\noindent Structures  $(F, \Lm_K)$ do not have definable Skolem functions, as can be seen from the following counterexample:
\begin{lemma}
Let $\Pi$ be the projection map 
\[ \Pi: A:= \{(x,y,z) \in F^3 \ | \ \ord z = \ord(y-x) \} \to F^2 : (x,y,z) \mapsto (x,y).\]
There exists no $\Lm_{K}$-definable function $f$ such that $\Pi \circ f = \mathrm{Id}_{\mathrm{Im}_{\Pi}}$.
\end{lemma}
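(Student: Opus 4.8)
The plan is to argue by contradiction: suppose such an $\Lm_K$-definable section $f$ exists. Then $f$ is an $\Lm_K$-definable function $F^2 \to F^3$ whose first two coordinates are the identity, so $f(x,y) = (x,y,h(x,y))$ for some $\Lm_K$-definable function $h\colon \mathrm{Im}_\Pi \to F$, and the defining property $\Pi\circ f = \mathrm{Id}$ forces $\ord h(x,y) = \ord(y-x)$ for all $(x,y)$ with $x\neq y$ (note $\mathrm{Im}_\Pi$ is exactly $\{(x,y) : x \neq y\}$, since $z$ can always be chosen of the right order). So it suffices to show that no $\Lm_K$-definable function $h(x,y)$ can satisfy $\ord h(x,y) = \ord(y-x)$ identically on $x \neq y$.

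First I would invoke Lemma~\ref{lemma:deffun-lscal}: any $\Lm_K$-definable function $h\colon A \subseteq F^2 \to F$ is, on each piece of a finite partition of $A$ into $(\Lm_K, \Delta_{K,F})$-cells, of the form $h(x,y) = $ a constant from $F$, or $h(x,y) = ax$, or $h(x,y) = ay$, with $a \in K$. Apply this with $A = \{(x,y) : x \neq y\}$. Then I would isolate one cell $C$ in the partition on which $h$ is a \emph{fixed} one of these three shapes, chosen so that $C$ has nonempty interior or at least contains pairs $(x,y)$ with $x-y$ taking infinitely many distinct orders — such a cell must exist, since a cell on which $\ord(x-y)$ is bounded (equivalently, the $Q_{n,m}$-coset of $x-y$, up to the center, is pinned down) cannot cover all large-order differences, and there are infinitely many orders to realize. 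On this cell, $\ord h(x,y)$ is either a constant ($h$ constant), or $\ord a + \ord x$, or $\ord a + \ord y$.

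The contradiction then comes from comparing $\ord h$ with $\ord(y-x)$ on $C$. If $h$ is constant, $\ord(y-x)$ would be constant on $C$, contradicting the choice of $C$. If $h(x,y) = ax$, fix a value of $x$ and vary $y$ inside the fibre $C_x := \{y : (x,y) \in C\}$: then $\ord h$ is constant while $\ord(y-x)$ is not (the fibre, being a one-variable $(\Lm_K,\Delta_{K,F})$-cell in $y$, still realizes infinitely many orders of $y - x$ unless it is pinned to a single $Q_{n,m}$-coset around a center, but then $C$ itself would not meet the "many orders" requirement — here one should pick $C$ and the fibre simultaneously, or rerun the cell-decomposition argument fibrewise). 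The case $h(x,y) = ay$ is symmetric in the roles of $x$ and $y$. In every case we contradict $\ord h(x,y) = \ord(y-x)$, so no such $f$ exists.

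The main obstacle is the bookkeeping in the middle step: extracting from the finite cell partition a single cell (and, if needed, a single fibre) on which $\ord(y-x)$ genuinely varies while $h$ has constant-order behaviour. This needs the observation that a finite union of $(\Lm_K,\Delta_{K,F})$-cells each pinning $x-y$ (mod center) to finitely many $Q_{n,m}$-cosets cannot cover $\{x \neq y\}$, because the orders $\ord(y-x)$ range over all of $\Gamma_K \setminus \{\infty\}$ — infinitely many values — while such cells realize only finitely many residues of $\ord(y-x)$ bounded in any given direction. I would phrase this cleanly as: among the cells partitioning $\{x \neq y\}$, at least one must contain, for every sufficiently large $\gamma \in \Gamma_K$, a pair with $\ord(y-x) = \gamma$; restricting $h$ to that cell and using Lemma~\ref{lemma:deffun-lscal} gives the contradiction directly. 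Everything else is routine.
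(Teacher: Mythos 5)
Your overall strategy --- classify $h$ via Lemma \ref{lemma:deffun-lscal} into the shapes ``constant'', ``$ax$'', ``$ay$'' on finitely many cells and then derive an order-counting contradiction --- is exactly the paper's. But the middle step, where you select the witness points, has a real gap, and your proposed ``clean phrasing'' at the end does not close it. You propose to find one cell $C$ that contains, for every sufficiently large $\gamma$, a pair with $\ord(y-x)=\gamma$, and to conclude directly. That conclusion only works when $h$ is constant on $C$. If $h_{|C}(x,y)=ax$, then $\ord h(x,y) = \ord a + \ord x$, and nothing in your selection of $C$ prevents $\ord x$ from co-varying with $\ord(y-x)$ across $C$: the set $\{(x,y) \mid \ord(y-x)=\ord x + \ord a\}$ realizes every sufficiently large value of $\ord(y-x)$, and yet on it $h(x,y)=ax$ has exactly the right order everywhere. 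So ``$\ord(y-x)$ takes infinitely many values on $C$'' is not the property you need. Your alternative --- fix $x$ and argue on the fibre $C_x$ --- would repair this, since it pins $\ord x$; but, as you yourself note, a cell on which $\ord(y-x)$ takes infinitely many values need not have a single fibre with that property, and you leave that step unproved.

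The paper closes the gap by choosing the witness family more carefully, rather than hoping a good cell or fibre falls out of the decomposition. The constant cells force $\ord(y-x)=\ord a_i$, hence only finitely many values, bounded by $M:=\max_i \ord a_i$. One then considers pairs with $\ord x = \ord y = \gamma_0$ for a \emph{fixed} $\gamma_0 < M$ and with $\ord(x-y)=\gamma_0+k$ for arbitrary $k>0$; such pairs exist (take $y=x(1+u\pi^k)$ with $u$ a unit), and once $k$ is large they cannot lie in any constant cell. On any cell where the third coordinate is $b_jx_j$, its order is the single number $\ord b_j + \gamma_0$ on this whole family, while $\ord(y-x)=\gamma_0+k$ ranges over infinitely many values, so finitely many such cells cannot cover the family. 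Fixing $\ord x$ and $\ord y$ simultaneously while letting $\ord(y-x)$ run is the ingredient your argument is missing; with it, the rest of what you wrote goes through.
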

\begin{proof}
Suppose that such a function $f$ exists.  Up to a finite partition of $\pi(A)$ in cells $\bigcup C_i\cup \bigcup D_j$, this function will be of one of the forms 
\[f_{|C_i}: C\to F^3: (x,y)\mapsto (x,y, a_i) \ \text{or} \ f_{|D_j}: D_j\to F^3: (x,y)\mapsto (x,y, b_jx_j),\]
with $a_i \in F$, $b_j\in K$ and $x_j$ is one of the variables $x$ and $y$. \\
On cells $C_i$, we use a function of the form $(x,y) \mapsto (x,y,a_i)$, which implies that $\ord (x-y) = \ord a_i$ for all $(x,y) \in C_i$. As our partition is finite, we can only have a finite number of cells of this type. Put $M := \max_i \ord a_i$, then all tuples $(x,y)$ for which $\ord (x-y)>M$ will be contained in $\bigcup D_j$.  For each $k>0$, this set contains elements $(x,y)$ that satisfy
\[\ord x =\ord y < M \wedge \ord x-y = \ord x +k,\]
which means that we would need a partition in an infinite number of parts $D_j$ to define $f$.
\end{proof}

This counterexample suggests that it might be impossible to have definable Skolem functions in a language where addition is not definable. This is our main motivation for studying subaffine structures: we will consider a language that has a restricted form of addition, and see whether this languages allows us to define Skolem functions.   We will consider the following the functions $\PlusR$ and $\MinR$, defined by 
 \[\PlusR: K^2 \to K : (x,y) \mapsto 
\left\{\begin{array}{ll} 
x + y & x,y \in R_K\\
0& \text{otherwise,}
\end{array}\right.\]\\
and analogously for $\MinR$, with $+$ replaced by $-$.


For these functions, we will study the language
\[\Lm_{\boxplus,K} := \{\boxplus, \boxminus\} \cup \Lm_K.\]

Let $F \supset K$ be  $(\F_q, \Z)$-fields, resp. $(\F_{q'}, \Z)$-fields. 
We will verify that a structure $(F, \Lm_{\PlusR,K})$ has cell decomposition and quantifier elimination for the language
\[\Lm_{\PlusR,K} := \{\PlusR, \MinR\}\cup \Lm_K.\]
\begin{definition} Let $(F, \Lm_{\PlusR,K})$ be a structure.\\
Write $\mathrm{Poly}_{\PlusR,K}$ for the set of functions that can be defined as a composition of the functions $\PlusR,\MinR$ and $\overline{c}$ for $c \in K$, combined with variables $x_1, x_2, \ldots$  and constants from $F$. 
\end{definition}
\noindent It is important to stress that  these expressions do not entirely behave like polynomials. More precisely,  we have to be aware that distributivity does not always hold. For example: suppose $0<k_1<k_2<k_3$, then
\[\pi^{-k_2}(\pi^{k_1} \PlusR \pi^{k_3}) \neq \pi^{k_1-k_2} \PlusR \pi^{k_3-k_2} = 0.\] 
First we need to define a notion of cells for this context. 
\begin{definition}
Let $\Delta_{\PlusR, K}$ be the set
\[\bigcup_{r\in \N}\{ a(x_1,\ldots, x_r)-b(x_1, \ldots, x_r) \ | \ a(x), b(x) \in \mathrm{Poly}_{\PlusR,K} \}.\] 
A subset of $F^k$ is called a $(\PlusR,K)$-cell if it is a $(\Lm_{\PlusR,K}, \Delta_{\PlusR,K})$-cell and the center is a function from $\mathrm{Poly}_{\PlusR,K}$.
\end{definition}

\noindent In the next lemmas, we will show that, up to a finite partition in cells,  $\mathrm{Poly}_{\PlusR,K}$-functions can always be written in a fairly simple way. 
Note that for every $\gamma_0 \in \Gamma_K$, the following function is (quantifier free) definable:
\[\boxplus_{\gamma_0}: (x,y) \mapsto \left\{\begin{array}{ll}
x+y & \ord x,\ord y \geqslant \gamma_0\\
0 & \text{otherwise} 
\end{array}\right.\]
Moreover, we have the following calculation rule. For every $a\in K; b,c \in F, \gamma \in \Gamma_K$:
\[ a(b \boxplus_{\gamma} c) = ab \boxplus_{\gamma + \ord a} ac.\]

   \begin{lemma} \label{lemma:Sincells}
   Take $\gamma \in \Gamma_K, a \in K;\  d(x), h(x) \in\mathrm{Poly}_{\PlusR, K}$. Let $\square$ denote `$<$', `$\leqslant$', `$>$' or `$\geqslant$'.
   The set \[S:=\{(x,t) \in F^{k+1} \ | \ \ord(at \boxplus_{\gamma} d(x)) \ \square\ \ord h(x)\}\] can be partitioned as a finite union of $(\PlusR, K)$-cells.  \end{lemma}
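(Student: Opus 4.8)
The plan is to reduce the study of the expression $at \boxplus_\gamma d(x)$ to a case distinction based on the relative orders of $at$, $d(x)$, and the threshold $\gamma$, and in each case rewrite the defining condition of $S$ so that it only involves precell conditions and $Q_{n,m}$-membership conditions on $\mathrm{Poly}_{\boxplus_R,K}$-functions of the $x$-variables together with a single condition on $t - c(x)$ for an appropriate center $c(x)$. First I would partition $F^{k+1}$ according to whether $\ord t \geqslant \gamma$ and whether $\ord(t + a^{-1}d(x)) \geqslant \gamma$ (equivalently, after multiplying by $a$, whether $\ord(at) \geqslant \gamma + \ord a$ and $\ord(at + d(x)) \geqslant \gamma + \ord a$); note $a^{-1}d(x)$ need not lie in $\mathrm{Poly}_{\boxplus_R,K}$, but the order conditions $\ord t \,\square\, \beta$ and $\ord(t - c) \,\square\, \beta$ for constants $c \in F$ and $\beta \in \Gamma_F$ are precell conditions by Lemma \ref{lemma:cd-defrel}, so this partition is harmless. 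On the piece where $\ord t < \gamma$, we have $at \boxplus_\gamma d(x) = 0$ by definition, so $S$ restricted there is either everything or nothing (depending on $\square$ and the convention $\ord 0 = +\infty$), and is trivially a finite union of cells. On the complementary piece $\ord t \geqslant \gamma$, we have $at \boxplus_\gamma d(x) = at + d(x)$ whenever also $d(x)$ has order $\geqslant \gamma + \ord a$ — which after a further precell partition on $x$ we may assume — so there the condition becomes $\ord(at + d(x)) \,\square\, \ord h(x)$, i.e. $\ord(t - c(x)) \,\square\, \ord h(x) - \ord a$ with center $c(x) = -a^{-1}d(x)$.

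The difficulty is that $c(x) = -a^{-1}d(x)$ is a priori not in $\mathrm{Poly}_{\boxplus_R,K}$, so it is not an admissible center for a $(\boxplus_R,K)$-cell. Here I would invoke the earlier lemmas promised in the text (``In the next lemmas, we will show that, up to a finite partition in cells, $\mathrm{Poly}_{\boxplus_R,K}$-functions can always be written in a fairly simple way''): on each cell of a suitable finite partition of the relevant $x$-domain, $d(x)$ agrees with a $\mathrm{Poly}_{\boxplus_R,K}$-function for which the scaling by $a^{-1}$ can be absorbed, using the calculation rule $a(b \boxplus_\gamma c) = ab \boxplus_{\gamma + \ord a} ac$ and the definability of $\boxplus_{\gamma_0}$ for every $\gamma_0$. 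Concretely, after partitioning so that the ``active'' summands of $d(x)$ (those actually contributing, i.e. of order $\geqslant$ the relevant threshold) are identified, $a^{-1}d(x)$ can be rewritten as an honest $\mathrm{Poly}_{\boxplus_R,K}$-expression in which each scalar coefficient from $K$ is replaced by its $a^{-1}$-multiple (still in $K$, since $K$ is a field) — this is exactly where we use that the coefficients live in the subfield $K$ and that $a \in K$. Then $c(x) \in \mathrm{Poly}_{\boxplus_R,K}$ as required.

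Finally, having arranged a center $c(x) \in \mathrm{Poly}_{\boxplus_R,K}$ and reduced the defining condition of $S$ to $\ord(t - c(x)) \,\square\, \ord h(x) - \ord a$ on each piece, I would add the trivial $Q_{n,m}$-condition $t - c(x) \in 0 \cdot Q_{n,m}$ when $\square$ forces $t = c(x)$, or $t - c(x) \in \lambda Q_{n,m}$ ranging over coset representatives $\lambda$ otherwise, and split $\square$ into its strict/``no condition'' constituents using Lemma \ref{lemma:cd-defrel} exactly as in the proof of Proposition \ref{prop:ldistqe}; each resulting piece is then a $(\boxplus_R,K)$-cell. Since all partitions used (on $\ord t$ versus $\gamma$, on the order profile of the summands of $d(x)$ relative to the thresholds, on residue classes mod $Q_{n,m}$, and the splitting of $\square$) are finite, $S$ is a finite union of $(\boxplus_R,K)$-cells. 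The main obstacle, as indicated, is the bookkeeping in the ``simple form'' lemmas for $\mathrm{Poly}_{\boxplus_R,K}$-functions needed to make $a^{-1}d(x)$ a legitimate center; once those are in place the rest is a routine case analysis mirroring the $\Ldist$ computations.
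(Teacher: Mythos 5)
Your overall strategy is the paper's: split off the pieces where the truncated sum $at \boxplus_{\gamma} d(x)$ degenerates to $0$ (where the condition on $S$ becomes vacuous or empty), and on the remaining piece replace it by the honest sum $at + d(x)$, turning the condition into a cell condition with center $-a^{-1}d(x)$ that is handled by intersecting cells via Theorem \ref{theorem:lmcell-doorsnede}. But two things go wrong in your write-up. First, the threshold bookkeeping is incorrect: by the definition of $\boxplus_{\gamma}$, the sum is nonzero exactly when $\ord(at) \geqslant \gamma$ \emph{and} $\ord d(x) \geqslant \gamma$, i.e.\ when $\ord t \geqslant \gamma - \ord a$ and $\ord d(x) \geqslant \gamma$. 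Your partition uses $\ord t \geqslant \gamma$ (off by $\ord a$, so for $\ord a \neq 0$ you misidentify the value of the expression on part of your first piece) and, worse, uses the order of the sum $\ord(at + a\cdot a^{-1}d(x))$ as the second test, which is not the condition governing whether the second argument of $\boxplus_{\gamma}$ is below threshold. The paper sidesteps all of this by first dividing the whole inequality by $a$, reducing to $a=1$ with the shifted threshold $\gamma - \ord a$; you should do the same, or at least carry the $\ord a$ shift correctly.

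Second, the obstacle you identify as the main difficulty is a phantom, and your resolution of it is circular. The set $\mathrm{Poly}_{\PlusR,K}$ is \emph{defined} as the closure of variables and constants under $\PlusR$, $\MinR$ and the maps $\overline{c}$ for $c \in K$; since $a \in K$ and $K$ is a field, $-a^{-1}d(x) = \overline{-a^{-1}}\circ d(x)$ lies in $\mathrm{Poly}_{\PlusR,K}$ immediately, with no rewriting or ``absorption'' of scalars needed (the failure of distributivity is irrelevant here, because you never need to push $a^{-1}$ inside the $\boxplus$'s). Moreover, the ``simple form'' result you invoke to repair this is Lemma \ref{lemma:Rplus-functionpartition}, which in the paper comes \emph{after} the present lemma and whose proof explicitly uses it; relying on it here would make the argument circular. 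Delete that paragraph, fix the thresholds, and the rest of your case analysis goes through.
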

   \begin{proof}
   First note that we may suppose that $a =1$ (if $at \neq 0$), since 
   \[\ord(at \boxplus_{\gamma} d(x)) \ \square\ \ord h(x) \Leftrightarrow \ord\left(t \boxplus_{\gamma - \ord a} \frac{d(x)}{a}\right) \square\ \ord \frac{h(x)}{a}.\]
   Put $a=1$. The set $S$ can then be partitioned as the union of the following three sets:
   \begin{eqnarray*}
   S &=& \{(x,t) \in F^{k+1} \ | \ \ord t < \gamma \wedge \ord 0 \,\square\, \ord h(x) \}\\
   && \cup \ \{(x,t) \in F^{k+1} \ | \ \ord t \geqslant \gamma \wedge \ord d(x) < \ord \gamma \wedge \ord 0 \,\square\, \ord h(x)\}\\
   && \cup \left(\{(x,t) \in F^{k+1} \ | \ \ord t \geqslant \gamma \wedge \ord d(x) \geqslant \gamma\}\right. \\ && \hspace{1cm}\left.\cap \ \{(x,t) \in F^{k+1} \ | \ \ord (t + d(x))\ \square\ \ord h(x) \}\right)
   \end{eqnarray*}
   The first two sets are cells. The third set is the intersection of two $(\PlusR, K)$-cells and thus again a finite union of cells by Theorem \ref{theorem:lmcell-doorsnede}.     \end{proof}
\begin{lemma}\label{lemma:Rplus-functionpartition}
For $x = (x_1, \ldots, x_n)$, and $t$ one variable, let the functions $f_1(x,t), \,$ $\ldots, \, f_r(x,t)$ be  in  $\mathrm{Poly}_{\PlusR,K}$. 
$F^{k+1}$ can be partitioned in a finite number of cells $A$, such that on each cell $A$ there are  $\gamma_1, \ldots, \gamma_r \in \Gamma_K$, such that either
 \[f_i(x,t) = h_i(x) \quad \text{or}\quad f_i(x,t) = a_it 
 \mathor  f_i(x,t) = a_it \boxplus_{\gamma_i} d_i(x),\]
 with $a_i \in K$, and $h_i(x), d_i(x)$ are in  $\mathrm{Poly}_{\PlusR,K}$. 
 \end{lemma}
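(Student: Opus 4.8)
The plan is to proceed by induction on the structure (the ``depth'' of composition) of the expressions $f_i(x,t) \in \mathrm{Poly}_{\PlusR,K}$, treating all of $f_1, \ldots, f_r$ simultaneously so that the partition produced at each stage is common to all of them. The base case is when each $f_i$ is a variable or a constant from $F$: a variable $x_j$ (with $j \leqslant n$) and a constant $c \in F$ are both of the form $h_i(x)$, while the variable $t$ is of the form $a_i t$ with $a_i = 1$, so there is nothing to do. For the inductive step, recall that every element of $\mathrm{Poly}_{\PlusR,K}$ is built from such atoms by applying the unary maps $\overline{c}$ ($c \in K$) and the binary maps $\PlusR$, $\MinR$. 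So it suffices to show that the three normal forms in the statement — call them \emph{type (h)}, \emph{type (at)}, and \emph{type (at$\boxplus$d)} — are closed (up to a further finite partition into cells) under $\overline{c}$, $\PlusR$ and $\MinR$.

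Closure under $\overline{c}$ is immediate using the calculation rule $a(b \boxplus_{\gamma} c) = ab \boxplus_{\gamma + \ord a} ac$ displayed before Lemma \ref{lemma:Sincells}: $\overline{c}$ sends type (h) to type (h), type (at) to type (at) (replacing $a_i$ by $ca_i$, and if $ca_i = 0$ we land in type (h) with $h_i = 0$), and type (at$\boxplus$d) to type (at$\boxplus$d) with the threshold shifted. The work is in closure under $\PlusR$ (and, identically, $\MinR$). Here one must analyze $f_i \PlusR f_j$ according to the normal forms of $f_i$ and $f_j$ and whether the values lie in $R_K$. The key observation is that for a fixed normal form of $f_i$, the condition ``$f_i(x,t) \in R_K$'' is a condition of the shape $0 \,\square\, \ord(\text{something})$ handled by Lemma \ref{lemma:Sincells} (when $f_i$ is of type (at$\boxplus$d), this is $\ord(a_i t \boxplus_{\gamma_i} d_i(x)) \geqslant 0$, exactly the set $S$ of that lemma with $\square$ being $\geqslant$ and $h \equiv 1$; when $f_i$ is of type (at) or (h) it is even simpler). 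So after a finite partition into cells we may assume each of $f_i, f_j$ is either identically outside $R_K$ on the cell — in which case $f_i \PlusR f_j \equiv 0$, a type (h) function — or lies in $R_K$ throughout the cell. In the latter case $f_i \PlusR f_j$ agrees with the honest sum $f_i + f_j$, and one checks case by case that the sum of two normal-form functions is again a normal-form function: $(\text{h}) + (\text{h}) = (\text{h})$; $(\text{h}) + (at) = (at \boxplus_{\gamma} d)$ for a suitable $\gamma$ (indeed, on the cell both summands have values in $R_K$, so the threshold is harmless and one may take $\gamma = 0$, or more carefully $\gamma = \min$ of the relevant orders, which is constant on the cell after a further partition using Lemma \ref{lemma:cd-defrel}); $(at) + (a't) = ((a+a')t)$, which is type (at) or, if $a + a' = 0$, type (h); and sums involving type (at$\boxplus$d) are handled the same way by absorbing into a single $\boxplus_{\gamma}$ expression, using the calculation rule and the fact that on a cell the relevant valuations are controlled.

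The main obstacle I anticipate is \textbf{bookkeeping the thresholds $\gamma_i$ and keeping the partition finite}. Because distributivity fails (as the remark before Lemma \ref{lemma:Sincells} warns), one cannot simply algebraically simplify; instead one must repeatedly invoke Lemma \ref{lemma:Sincells} to split off the regions where a subexpression of the form $a t \boxplus_{\gamma} d(x)$ does or does not land in $R_K$, and Lemma \ref{lemma:cd-defrel} to make the relevant order comparisons (hence the constancy of the $\gamma_i$ on each piece) definable. One has to be careful that, when descending through the composition tree, the number of cells produced at each node stays finite: this is fine because at each of the finitely many nodes we apply Lemma \ref{lemma:Sincells} a bounded number of times, and the intersection/refinement of finitely many cell partitions is again a finite cell partition by Theorem \ref{theorem:lmcell-doorsnede}. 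A secondary subtlety is that when we combine $f_i$ and $f_j$ we must produce \emph{one} partition refining the partitions adapted to each; again Theorem \ref{theorem:lmcell-doorsnede} guarantees this stays within finitely many $(\PlusR,K)$-cells, and one should note that the centers of the resulting cells remain in $\mathrm{Poly}_{\PlusR,K}$, as required by the definition of $(\PlusR,K)$-cell.
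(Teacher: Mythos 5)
Your proposal is correct and follows essentially the same route as the paper: induction on the composition depth, closure of the three normal forms under $\overline{c}$ via the rule $a(b\boxplus_\gamma c)=ab\boxplus_{\gamma+\ord a}ac$, and closure under $\PlusR$/$\MinR$ by first using Lemma \ref{lemma:Sincells} to split into cells according to whether the subexpressions land in $R_K$ (and whether the arguments of the inner $\boxplus_{\gamma_i}$ clear their thresholds), then collapsing the honest sum into a single $a t\boxplus_{\gamma} d(x)$ with $\gamma=\min(\gamma_f,\gamma_g)$. The paper's proof is just a slightly more explicit version of your case analysis for the $\PlusR$ step.
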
.
 
 \begin{proof} We will work by induction on the number of compositions. Suppose the lemma holds for functions $f$ and $g$. It suffices to check that the lemma also holds for $\overline{c} \circ f$ and $f\PlusR g$. Take a suitable decomposition into cells $A$. Choose $c \in K$. 
For all $(x,t) \in A$, the function $\overline{c} \circ f$ will have one of the following forms: either
 \begin{equation}(\overline{c} \circ f)(x,t) = \overline{c} \circ h_f(x) \quad \text{or}\quad (\overline{c} \circ f)(x,t) = ca_ft ,
 \label{eq:boxplus1}\end{equation}or 
 \begin{equation}  (\overline{c} \circ f)(x,t)=c(a_f t\boxplus_{\gamma_f} d_f(x)).\label{eq:boxplus2}\end{equation}
 If we have functions as in \eqref{eq:boxplus1}, we are done. We can rewrite \eqref{eq:boxplus2} as
 \[ c(a_ft \boxplus_{\gamma_f} d_f(x)) = ca_ft \boxplus_{\gamma_f +\ord c} (\overline{c} \circ d_f(x)).\]
We can apply a similar reasoning to the function $(f \PlusR g)$. In most cases, it is obvious that the function has one of the required forms. The only nontrivial cases are when $(f \PlusR g)$ has one of the following forms for $(x,t) \in A$: 
   \[(f \PlusR g)(x,t) = \left\{\begin{array}{ll}
  a_ft \PlusR (a_gt \boxplus_{\gamma_g} d_g(x)) & \text{(case 1)}\\
  h_f(x) \PlusR (a_gt \boxplus_{\gamma_g} d_g(x)) & \text{(case 2)}\\
  (a_ft \boxplus_{\gamma_f} d_f(x))\PlusR(a_gt \boxplus_{\gamma_g} d_g(x)) & \text{(case 3)}   \end{array}\right.\]
  Remember that the set $\{(x,t) \in K^{k+1} \ | \ \ord (a t \boxplus_{\gamma} d(x)) \geqslant 0\}$ can be written as a finite union of cells, by Lemma \ref{lemma:Sincells}.\\  We will check that the lemma holds in case 3 (Case 1 and 2 are similar). Partition $A$ further in cells such that either $\ord (a_ft \boxplus_{\gamma_f} d_f(x)) < 0$, or $\ord (a_ft \boxplus_{\gamma_f} d_f(x)) \geqslant 0$ for all $(x,t) \in A$ (and similarly for $g$). We only need to consider cells where $\ord (a_ft \boxplus_{\gamma_f} d_f(x)) \geqslant 0$ and $\ord (a_gt \boxplus_{\gamma_g} d_f(x)) \geqslant 0$ as our claim is trivially true on other cells. Partition these cells further depending on the order of $a_ft, a_gt, d_f(x), d_g(x)$. The only case that is not immediately obvious is when \[\ord a_ft \geqslant \gamma_f, \ord d_f(x) \geqslant \gamma_f, \ord a_gt \geqslant \gamma_g \text{\  and\  } \ord d_g(x) \geqslant \gamma_g.\] Let $C$ be such a cell. Without loss of generality, we may suppose that $\gamma_f \leqslant \gamma_g$. For $(x,t) \in C$ we find that
  \begin{eqnarray*}
  (f \PlusR g)_{|C}
  &=&(a_ft \boxplus_{\gamma_f} d_f(x))\PlusR(a_gt \boxplus_{\gamma_g} d_g(x)) \\ 
  &=& (a_f+ a_g)t + (d_f(x) + d_g(x))\\
  &=& (a_f+ a_g)t \boxplus_{\gamma_f} (d_f(x) \boxplus_{\gamma_f} d_g(x)).
    \end{eqnarray*}
   \end{proof}


\begin{proposition} \label{prop:qfd=cell_for_PlusR}
Any $\Lm_{\PlusR}$-definable set can be partitioned as a finite union of $(\PlusR,K)$-cells.
\end{proposition}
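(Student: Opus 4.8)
The plan is to follow the same strategy used for $\Ldist$ in Proposition \ref{prop:ldistqe}, but now carrying the extra structure coming from the partial-addition functions $\PlusR, \MinR$ and the scalar multiplications $\overline{c}$, $c \in K$. By Theorem \ref{theorem:lmcell-doorsnede} (together with its consequence that finite boolean combinations of $(\PlusR,K)$-cells can be re-partitioned into $(\PlusR,K)$-cells), it suffices to show that each atomic $\Lm_{\PlusR,K}$-formula, and the complement of each such formula, defines a set that is a finite union of $(\PlusR,K)$-cells. The atomic formulas are of the forms $\Dvier_r(g_1,g_2,g_3,g_4)$ and $R_{n,m}(g_1,g_2,g_3)$, where now each $g_i$ is a term of the language, i.e. a function in $\mathrm{Poly}_{\PlusR,K}$ (constants, variables, closed under $\PlusR$, $\MinR$, $\overline{c}$).

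First I would reduce the shape of the terms $g_i$. Fix the distinguished variable $t$ and write the remaining variables as $x=(x_1,\dots,x_k)$. By Lemma \ref{lemma:Rplus-functionpartition}, $F^{k+1}$ can be partitioned into finitely many $(\PlusR,K)$-cells on each of which every $g_i(x,t)$ has one of the three normal forms $h_i(x)$, $a_i t$, or $a_i t \boxplus_{\gamma_i} d_i(x)$ with $a_i \in K$ and $h_i, d_i \in \mathrm{Poly}_{\PlusR,K}$. So on each piece of this partition the atomic formula becomes a relation between such normal-form terms. The $R_{n,m}$ case is then handled exactly as in Proposition \ref{prop:ldistqe}: using $R_{n,m}(g_1,g_2,g_3) \leftrightarrow \bigvee_{[\lambda]\in\Lambda_{n,m}} R_{n,m}(g_1,g_2,\lambda)\wedge R_{n,m}(0,g_3,\lambda)$ and the fact that complements of $\{g_1-g_2 \in \lambda Q_{n,m}\}$ are finite disjoint unions of sets $\{g_1-g_2 \in \mu Q_{n,m}\}$, one reduces to sets of the form $\{(x,t) \mid b_1(x,t)-b_2(x,t) \in \lambda Q_{n,m}\}$ with $b_i$ in normal form; these are already $(\PlusR,K)$-precells (hence cells) once one checks, via the calculation rule $a(b\boxplus_\gamma c) = ab \boxplus_{\gamma+\ord a} ac$ and a further case split on the relative orders, that a difference of two $\boxplus_\gamma$-terms lies in $\mathrm{Poly}_{\PlusR,K}\times\mathrm{Poly}_{\PlusR,K}$ differences. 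For the order inequalities $\Dvier_r(g_1,g_2,g_3,g_4)$, i.e. $\ord(g_1-g_2) < \ord(g_3-g_4)+r$, I would again put all four terms into normal form, note that $g_1-g_2$ and $g_3-g_4$ are then elements of $\Delta_{\PlusR,K}$ possibly up to a $\boxplus_\gamma$-term, and then invoke Lemma \ref{lemma:Sincells} (with its $\square \in \{<,\le,>,\ge\}$) to partition such a set into finitely many $(\PlusR,K)$-cells; when the term genuinely involves the variable $t$ inside a $\boxplus_\gamma$, Lemma \ref{lemma:Sincells} is exactly what splits off the ``$\ord t < \gamma$'' and ``$\ord d(x)<\gamma$'' degenerate pieces from the honest affine piece $\ord(t+d(x))\,\square\,\ord h(x)$.

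The main obstacle I expect is bookkeeping around the failure of distributivity: the normal forms $a_i t \boxplus_{\gamma_i} d_i(x)$ do not combine like polynomials (as the remark before Lemma \ref{lemma:Sincells} warns), so when forming a difference of two atomic terms one cannot simply subtract — one must first partition $F^{k+1}$ so that, on each piece, the relevant orders ($\ord a_i t$, $\ord d_i(x)$, $\ord\gamma_i$) are comparable in a fixed way, and only then does the difference collapse to an honest $\mathrm{Poly}_{\PlusR,K}$-difference or an honest $\boxplus_\gamma$-difference. This is precisely the kind of case analysis carried out inside the proof of Lemma \ref{lemma:Rplus-functionpartition} (case 3), and the work here is to thread it through once more for the atomic relations rather than for the terms themselves; conceptually it is routine, but it is where all the care is needed. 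Once every atomic formula and its complement is exhibited as a finite union of $(\PlusR,K)$-cells, Theorem \ref{theorem:lmcell-doorsnede} closes the argument, since an arbitrary quantifier-free $\Lm_{\PlusR,K}$-definable set is a finite boolean combination of such atoms.
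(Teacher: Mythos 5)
Your plan for the quantifier-free case matches the paper's proof in outline: reduce via Theorem \ref{theorem:lmcell-doorsnede} to the atomic relations, normalize terms with Lemma \ref{lemma:Rplus-functionpartition} to the shapes $h(x)$, $at$, $at\boxplus_\gamma d(x)$, and then case-split on whether $\ord at$ and $\ord d(x)$ clear the threshold $\gamma$ so that the $\boxplus_\gamma$-term collapses to an honest affine expression. Your treatment of the $R_{n,m}$ atoms is essentially the paper's. However, there are two genuine gaps.

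First, the proposition is about \emph{all} $\Lm_{\PlusR,K}$-definable sets, but your argument, by your own closing sentence, only covers quantifier-free definable sets. The paper closes this gap explicitly: once quantifier-free sets are decomposed into cells, it observes that for every $a\in\Delta_{\PlusR,K}$ the relation $\ord a \equiv l \bmod n$ is expressible by precell conditions, so Lemma \ref{cd-lemma:qe} applies and yields quantifier elimination, which is what lets one pass from quantifier-free formulas to arbitrary formulas. Without this step (or some substitute handling of projections of cells), your proof does not establish the stated result.

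Second, for the order atoms you lean on Lemma \ref{lemma:Sincells}, but that lemma only treats inequalities of the form $\ord(at\boxplus_\gamma d(x)) \ \square\ \ord h(x)$ where the right-hand side is free of $t$. The hard subcase of $\widetilde{S_2}$ is when \emph{both} sides depend on $t$, i.e. $\ord(a_2t\boxplus_{\gamma_2}d_2(x)) < \ord(a_3t\boxplus_{\gamma_3}d_3(x))$. After restricting to the region where both $\boxplus$-terms are genuine sums, the paper rewrites this as $\ord a_2(t-d_2'(x)) < \ord a_3(t-d_3'(x))$ and introduces a further partition according to how $\ord(t-d_2'(x))$ compares with $\ord(d_2'(x)-d_3'(x))$ (the sets $B_<$, $B_=$, $B_>$), reducing on each piece to a single-center cell condition. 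Your "further case split on the relative orders" gestures at this but attributes the work to a lemma that does not cover it; you need the two-centers comparison argument explicitly.
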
   \begin{proof}
   First we show that quantifier-free definable sets can be partitioned as a finite union of cells. Because of Theorem \ref{theorem:lmcell-doorsnede}, it is sufficient to check that sets of type $S_1$ or $S_2$ can be partitioned as a finite union of cells:
   \begin{eqnarray*}S_1&:=&\{(x,t) \in F^{k+1} \ | \ f_1(x,t)-f_2(x,t) \in \lambda Q_{n,m}\}, \\S_2&:=& \{(x,t) \in F^{k+1} \ | \ \ord f_1(x,t) - f_2(x,t) < \ord f_3(x,t)-f_4(x,t)\},\end{eqnarray*}
   where the $f_i(x,t)$ are functions from $\mathrm{Poly}_{\PlusR, K}$.
   Intersect the sets $S_1$ and $S_2$ with sets $\{(x,t) \in F^{k+1} \mid \ord f_i(x,t) \ \square \ \ord f_j(x,t) \}$ or $\{(x,t) \in F^{k+1} \mid \ord f_i(x,t) \ \square \ \ord h_j(x) \}$,  where  $\square$ may denote $<, =$ or $>$. When we apply Theorem \ref{theorem:lmcell-doorsnede} and Proposition \ref{lemma:Rplus-functionpartition} to these intersections, it is easy to see that
%
%
   it suffices to check that the sets $\widetilde{S_1}$ and $\widetilde{S_2}$
   \begin{eqnarray*}\widetilde{S_1}&:=& \{(x,t) \in F^{k+1} \ | \ a_1t \boxplus_{\gamma_1} d_1(x) \in \lambda Q_{n,m}\}, \\ \widetilde{S_2}&:=& \{(x,t) \in F^{k+1} \ | \  \ord(a_2t \boxplus_{\gamma_2} d_2(x)) < \ord(    a_3t \boxplus_{\gamma_3} d_3(x))\},\end{eqnarray*}
   can be partitioned as a finite union of cells for all $\gamma_i \in \Gamma_K$ and $d_i(x) \in \mathrm{Poly}_{\PlusR,K}$.
   For $\widetilde{S_1}$ this follows from the observation that the expression ${at\boxplus_{\gamma} d(x) \in \lambda Q_{n,m}}$ is equivalent with
   \begin{align*} &\ [\ \ord at < \gamma \wedge \lambda = 0] \vee [\ord at \geqslant \gamma \wedge \ord d(x) < \gamma \wedge \lambda = 0]\\
  \vee&\left[\ \ord at \geqslant \gamma \wedge \ord d(x) \geqslant \gamma \wedge \left(t-\frac{-d(x)}{a}\right) \in \frac{\lambda}{a} Q_{n,m}\right].
    \end{align*}
    For the set $\widetilde{S_2}$, note that we can restrict our attention to $\widetilde{S_2}^{(1)}: = \widetilde{S_2} \cap  A_{\geqslant}$, with
    \[A_{\geqslant} := \{(x,t) \in F^{k+1} \ |   \ord a_it \geqslant \gamma_i \wedge \ord d_i(x) \geqslant \gamma_i, \text{\ for\ } i \in \{2,3\} \},\]
    since it follows easily from Theorem \ref{theorem:lmcell-doorsnede} that $\widetilde{S_2} \backslash \widetilde{S_2}^{(1)}$ can be partitioned as a finite union of cells. Write $d_i'(x) = \frac{-d_i(x)}{a_i}$. Now $\widetilde{S_2}^{(1)}$ is equal to the set
    \[\widetilde{S_2}^{(1)} = \{(x,t) \in A_{\geqslant} \ | \ \ord a_2(t - d_2'(x)) < \ord a_3(t -d_3'(x))\}. \]
    For $\square =$ `$<$', `$=$', or `$>$', put \[B_{\square}:= \{(x,t) \in K^{k+1} \ | \ \ord(t-d_2'(x)) \  \square \ \ord(d_2'(x) - d_3'(x) \}.\]     
    Then $\widetilde{S_2}^{(1)} = \left(\widetilde{S_2}^{(1)} \cap B_{<} \right) \cup   \left(\widetilde{S_2}^{(1)} \cap B_{=} \right) \cap  \left(\widetilde{S_2}^{(1)} \cap B_{>} \right)$.\\
    Now if $\ord (t-d_2'(x)) < \ord(d_2'(x) - d_3'(x))$, then $\ord (t - d_3'(x)) = \ord (t - d_2'(x))$, so 
    \[\widetilde{S_2}^{(1)} \cap B_{<} = A_{\geqslant} \cap B_{<} \cap \{(x,t) \in F^{k+1} \ | \ord a_2 < \ord a_3\}.\] 
    By Theorem \ref{theorem:lmcell-doorsnede}, this can be written as a finite union of cells. The situation is similar when we intersect with $B_{=}$ or $B_{>}$. 
    \\\\
    The fact that quantifier-free definable sets can be partitioned as a finite union of cells, also implies that for all $a_i \in \Delta_{\PlusR,K}$, the relation $\ord a_i \equiv l \mod n$ can be defined using precell relations. Because of this, structures $(F, \Lm_{\PlusR, K})$ have quantifier elimination by Lemma \ref{cd-lemma:qe}.
    
    \end{proof}
    \noindent The following classification of the definable functions is an immediate consequence of this proposition.
    \begin{corol}\label{corol:PlusR-functions}
    Let  $f: A \subseteq F^{\ell} \to F^{r}$ be an $\Lm_{\PlusR,K}$-definable function. There exists a finite partition of $A$ in cells $C$, such that on each cell $C$, 
    \[ f_{|C}(x) = (f_1(x), \ldots f_r(x)),\]
    where $f_i(x) \in \mathrm{Poly}_{\PlusR,K}$, for $i = 1, \ldots, r$.   \end{corol}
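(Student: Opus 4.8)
The plan is to deduce Corollary \ref{corol:PlusR-functions} from Proposition \ref{prop:qfd=cell_for_PlusR} in the same way Lemma \ref{lemma:deffun-ldist} was deduced from the corresponding cell decomposition for $\Ldist$, namely by applying the cell decomposition to the graph of $f$ and using functionality to pin down the shape of each cell. First I would reduce to the case $r=1$: an $\Lm_{\PlusR,K}$-definable function $f:A\subseteq F^\ell\to F^r$ is just an $r$-tuple of definable coordinate functions $f_i:A\to F$, so it suffices to handle each $f_i$ separately and then take a common refinement of the finitely many partitions obtained. So assume $f:A\subseteq F^\ell\to F$ is $\Lm_{\PlusR,K}$-definable.

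Next I would look at $\mathrm{Graph}\,f=\{(x,t)\in A\times F\mid t=f(x)\}\subseteq F^{\ell+1}$, which is $\Lm_{\PlusR,K}$-definable, hence by Proposition \ref{prop:qfd=cell_for_PlusR} admits a finite partition into $(\PlusR,K)$-cells
\[
G=\{(x,t)\in D\times F\mid \ord a_1(x)\ \square_1\ \ord(t-c(x))\ \square_2\ \ord a_2(x),\ t-c(x)\in\lambda Q_{n,m}\},
\]
where the center $c(x)$ lies in $\mathrm{Poly}_{\PlusR,K}$ and $D$ is a $(\Lm_{\PlusR,K},\Delta_{\PlusR,K})$-precell. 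Projecting each such cell $G$ down to the $x$-coordinates gives a partition of $A$; on each piece, functionality forces exactly one $t$ with $(x,t)\in G$ for every $x$ in the projection. As in Lemma \ref{lemma:deffun-ldist}, this uniqueness is incompatible with $\lambda\neq 0$, since $\lambda Q_{n,m}$ is infinite and an open subgroup coset would produce more than one admissible $t$; so $\lambda=0$, which collapses the cell condition to $t-c(x)=0$, i.e. $f_{|D}(x)=c(x)$. Since $c(x)\in\mathrm{Poly}_{\PlusR,K}$, this is exactly the desired form. Taking the common refinement over all coordinates $f_i$ and all cells in the graph decompositions yields the finite partition of $A$ claimed in the statement.

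The only point that needs a little care — and the main (mild) obstacle — is the argument that $\lambda=0$. Unlike in the $\Ldist$ case, here the projection of a cell $G$ onto $K^\ell$ need not be all of $D$; one must argue on the image of the projection, where by construction every fibre is a singleton, and note that if $\lambda\neq 0$ then for any $x$ in the image the set $\{t\in F\mid t-c(x)\in\lambda Q_{n,m},\ \ord a_1(x)\ \square_1\ \ord(t-c(x))\ \square_2\ \ord a_2(x)\}$ still contains at least two distinct elements (the order constraints cut out a union of cosets of $Q_{n,m}$ modulo powers of $\pi$, each of which is infinite, and at least one such coset is nonempty precisely because $x$ is in the image). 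This contradicts functionality, so $\lambda=0$ on every cell in the decomposition, and the rest is immediate.
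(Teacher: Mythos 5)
Your proposal is correct and follows essentially the route the paper intends: the paper gives no explicit proof, stating only that the corollary is an immediate consequence of Proposition \ref{prop:qfd=cell_for_PlusR}, and the implicit argument is exactly your graph-decomposition argument modelled on Lemma \ref{lemma:deffun-ldist} (reduce to $r=1$, decompose $\mathrm{Graph}\,f$ into $(\PlusR,K)$-cells, use functionality to force $\lambda=0$ so each cell collapses to $t=c(x)$ with $c\in\mathrm{Poly}_{\PlusR,K}$). Your added care about arguing on the image of the projection rather than all of $D$ is a sound refinement of the same idea, not a different approach.
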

    
    \begin{proposition}
    The addition function $+: F^2\to F: (x,y) \mapsto x+y$ is not definable in $\Lm_{\PlusR,K}$.
    \end{proposition}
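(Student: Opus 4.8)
The plan is to argue by contradiction, leveraging the classification of $\Lm_{\PlusR,K}$-definable functions from Corollary \ref{corol:PlusR-functions}. Suppose that $+:F^2\to F$ were $\Lm_{\PlusR,K}$-definable. Then by Corollary \ref{corol:PlusR-functions}, there is a finite partition of $F^2$ into $(\PlusR,K)$-cells $C$ such that on each $C$, the addition map agrees with a single function $g_C(x,y)\in\mathrm{Poly}_{\PlusR,K}$. Since finitely many cells cover $F^2$, at least one cell $C$ must contain an open subset (indeed, a cell with $\square_1$, $\square_2$ both ``no condition'' and $\lambda\neq 0$, or more simply: the cells partition $F^2$, so one of them has nonempty interior). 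The strategy is then to show that no single $\mathrm{Poly}_{\PlusR,K}$-function can equal $x+y$ on any nonempty open subset of $F^2$.

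The key technical step is to understand the behaviour of $\mathrm{Poly}_{\PlusR,K}$-functions. First I would invoke Lemma \ref{lemma:Rplus-functionpartition} (applied with, say, $t=y$ and $x=(x_1)$ playing the role of the parameter block, or symmetrically) to reduce $g_C$, after further partitioning, to one of the normal forms $h(x)$, $ay$, or $ay\boxplus_{\gamma}d(x)$ with $a\in K$. The first two forms clearly cannot equal $x+y$ on an open set: $h(x)$ does not depend on $y$, and $ay$ does not depend on $x$. So $g_C$ must have the form $ay\boxplus_{\gamma}d(x)$. On the locus where $\ord y\geqslant\gamma$ and $\ord d(x)\geqslant\gamma$ this equals $ay+d(x)$; matching with $x+y$ forces $a=1$ and $d(x)=x$, but then we would need $\ord x\geqslant\gamma$ to hold throughout an open set, which is impossible (any open set in $F$ contains elements of arbitrarily negative order). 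Outside that locus $g_C$ takes the value $0$ or $d(x)$, again failing to equal $x+y$. By symmetry in the two arguments (swapping the roles of $x$ and $y$ in the reduction), the same contradiction arises, so no cell can carry addition, a contradiction.

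The main obstacle, and the place requiring the most care, is the passage from ``$+$ is definable'' to ``$+$ agrees with a fixed $\mathrm{Poly}_{\PlusR,K}$-function on an \emph{open} set, with that function honestly depending on both variables.'' One must be careful that Lemma \ref{lemma:Rplus-functionpartition} is stated for a distinguished single variable $t$ and a parameter block $x$; to treat $x+y$ where neither variable is privileged, I would apply the lemma once with $t=y$ to get normal forms in $y$ over the parameter $x$, and separately note (or reapply with $t=x$) that the $x$-dependence of the surviving ``$d(x)$'' term must itself be nontrivial and open-like. A clean way to package this is: on a cell with nonempty interior, the function $g_C$ restricted to a horizontal slice $\{x_0\}\times F$ must equal $y\mapsto x_0+y$, which (by the $y$-normal form) forces the $ay\boxplus_\gamma(\cdot)$ shape with $a=1$; and restricted to a vertical slice it must equal $x\mapsto x+y_0$, which forces the $d(x)$-part to be $x+(\text{const})$ on an open set, contradicting the $\boxplus_\gamma$ truncation since open sets are unbounded below in valuation. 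Once this ``unboundedness defeats the truncation $\boxplus_\gamma$'' observation is in hand, the rest is routine bookkeeping over the finitely many cells.
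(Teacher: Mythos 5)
Your overall strategy---reduce to the normal forms of Lemma \ref{lemma:Rplus-functionpartition} and show that the truncation $\boxplus_\gamma$ is incompatible with genuine addition---matches the paper's, but the step where you extract the contradiction fails. You localize to a single cell $C$ with nonempty interior and argue that the condition $\ord x \geqslant \gamma$ cannot hold throughout an open set because ``any open set in $F$ contains elements of arbitrarily negative order.'' In the valuation topology this is false: the valuation ring $\cO_F=\{x \mid \ord x\geqslant 0\}$ is open, and every basic open ball is bounded below in valuation. Your intended contradiction evaporates on a concrete example: on the open cell $\cO_F\times\cO_F$ the addition map \emph{is} given by a single $\mathrm{Poly}_{\PlusR,K}$-function, namely $x\boxplus_{0}y$ (essentially $x\PlusR y$). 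So no contradiction can be obtained from one open cell; the obstruction is global, not local.

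The repair, which is how the paper argues, is to use the fact that the finitely many cells cover \emph{all} of $F^2$, in particular points of arbitrarily negative order. After applying Lemma \ref{lemma:Rplus-functionpartition} only finitely many truncation levels $\gamma_i$ occur; set $\gamma=\min_i\gamma_i$. Every point $(x,y)$ with $\ord x$ sufficiently negative (below $\gamma$ after accounting for the finitely many scalars $a_i$) is sent to $0$ by every truncated form, so all such points must lie in cells where $f$ takes one of the trivial forms $a_ix$, $b_iy$, $c_i$; and finitely many such functions cannot agree with $x+y$ on a set of that size (for instance $x+y=a_ix$ confines $(x,y)$ to a graph). Your closing intuition that ``unboundedness defeats the truncation'' is correct, but the unboundedness lives in $F^2$ as a whole, not in an arbitrary open subset, and your reduction to one open cell discards exactly the points where the truncation bites. (Your assertion that some cell in a finite partition of $F^2$ must have nonempty interior is also left unjustified, though that part is repairable; the valuation-topology error is the fatal one.)
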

    \begin{proof}
    Suppose addition is definable, say by some function $f$. Applying Corollary \ref{corol:PlusR-functions} and Lemma \ref{lemma:Rplus-functionpartition}, We can partition $F^2$ in cells $C_i$ and $D_i$ such that 
  \[f_{|C_i}(x,y) = a_ix \boxplus _{\gamma_{i,1}} (b_iy \boxplus_{\gamma_{i,2}} c_i),\]
    and \[f_{|D_i}(x,y) = a_ix \mathor f_{|D_i}(x,y) = b_iy \mathor f_{|D_i}(x,y) = c_i,\]
    for some $a_i,b_i,\in K, c_i \in F$ and $\gamma_{i,1}, \gamma_{i,2} \in \Gamma_K$. The precise value of these constants depends on $C_i$. Put $\gamma := \min_i\{\gamma_{i,1}\}$.
    Clearly all elements $(x,y)$ for which $\ord a_ix < \gamma$ must be contained in the cells $D_i$ 
    since for such elements, 
    \[f_{|C_i}(x,y) = 0 \neq x+y.\]
    It is clear that the functions $f_{|D_i}$ cannot be used to define addition on a large enough set,
    which proves that  the addition function `$+$' is not definable.
    \end{proof}
    
 \noindent The fact that addition is not definable is caused by the fact that we have restricted multiplication to multiplication by a constant. More precisely, the reason is the following (for simplicity, suppose that $\Gamma_K$ is $\Z$). In our language, it is impossible to take a `limit' for $\ord x$ going to $-\infty$. In a language with normal multiplication, we do not have this restriction, and as a consequence `$+$' can easily be defined in such a language. Take for example the language $\Lm = (\PlusR, \cdot)$. For any $x, y \in K$ with $\ord x \leqslant \ord y$ and $x \neq 0$, we can define addition using the following equality.
    \[ x+y =x\left(1\PlusR \frac{y}{x}\right).\]
    \[\]
    It is also not hard to see that definable Skolem functions do not always exist for $\Lm_{\PlusR,K }$. Indeed, this follows from the following counterexample.
    \begin{lemma}
    Let $\Pi$ be the projection map 
    \[ \Pi: \{(x,y,z) \in F^3 \ | \ \ord z = \ord(y-x) \} \to K^2 : (x,y,z) \mapsto (x,y)\}.\]
    There exists no $\Lm_{\PlusR,K}$-definable function $f$ such that $\Pi \circ f = \mathrm{Id}_{\mathrm{Im}_{\Pi}}$.
    \end{lemma}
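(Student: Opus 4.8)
The plan is to mimic the counterexample already given for $\Lm_K$ (the earlier lemma about the projection $\Pi\colon\{(x,y,z)\mid\ord z=\ord(y-x)\}\to F^2$), adapting it to the richer function class $\mathrm{Poly}_{\PlusR,K}$. Suppose for contradiction that a definable $f$ with $\Pi\circ f=\mathrm{Id}$ exists. By Corollary \ref{corol:PlusR-functions}, up to a finite partition of $\Pi(A)$ into $(\PlusR,K)$-cells, the third coordinate $f_3(x,y)$ of $f$ lies in $\mathrm{Poly}_{\PlusR,K}$; then by Lemma \ref{lemma:Rplus-functionpartition} (applied with, say, $y$ playing the role of the distinguished variable $t$ and $x$ among the parameters, and then symmetrically) we may further partition so that on each cell $f_3$ has one of the normal forms $h(x,y)$ (no dependence on the free variable), $ay$, or $ay\boxplus_\gamma d(x)$, with $a\in K$ and $d\in\mathrm{Poly}_{\PlusR,K}$; and iterating once more in the other variable we can arrange that on each cell $f_3$ is one of: a constant from $F$, $ax$, $by$, or a bounded-sum form $ax\boxplus_{\gamma_1}(by\boxplus_{\gamma_2}c)$ as in the proof that addition is not definable.

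The key point is then the same diophantine obstruction as before: on each cell the value $\ord(y-x)=\ord f_3(x,y)$ is controlled by one of these simple forms, and none of them — nor a finite union of them — can realize the full range of behaviour of $\ord(y-x)$. Concretely, I would run the argument in two stages. First, by the calculation rule $a(b\boxplus_\gamma c)=ab\boxplus_{\gamma+\ord a}ac$ and the definition of $\boxplus_\gamma$, any cell on which $f_3$ has the bounded-sum form forces $\ord x\geqslant\gamma_1-\ord a$ and $\ord y$ bounded below by some constant depending only on the cell; so the tuples $(x,y)$ with $\ord x=\ord y$ both very negative must all fall into cells where $f_3$ is a constant or a pure monomial $ax$ or $by$. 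On a constant cell, $\ord(y-x)$ is fixed, so such cells account for only finitely many values of $\ord(y-x)$. On a monomial cell $f_3=ax$ we get $\ord(y-x)=\ord a+\ord x$, which is impossible once $\ord x=\ord y$ and $\ord(y-x)>\ord x$: pick $x,y$ with $\ord x=\ord y=N$ (very negative) and $\ord(y-x)=N+k$ for $k$ larger than every $\ord a_i$ occurring; then $\ord(y-x)=N+k>N+\ord a\geqslant\ord(ax)=\ord f_3$, contradiction (and symmetrically for $f_3=by$). Since the partition is finite, some such $(x,y)$ must lie in some cell, and every case is excluded.

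The main obstacle I anticipate is bookkeeping rather than mathematics: one must be careful that the normal-form reduction of Lemma \ref{lemma:Rplus-functionpartition}, which is stated for one distinguished variable $t$ with the others as parameters, really does yield (after applying it twice, in $x$ and then in $y$, and partitioning into cells in between) a description of $f_3(x,y)$ valid uniformly on each piece, since distributivity fails for $\PlusR$ and compositions need not simplify. One also has to check that the "bad" tuples $(x,y)$ with $\ord x=\ord y=N$ and $\ord(y-x)=N+k$ genuinely exist in $\mathrm{Im}\,\Pi$ — but they do, since $\mathrm{Im}\,\Pi=\{(x,y)\mid x\neq y\}$ (for any such $(x,y)$ take $z$ with $\ord z=\ord(y-x)$), exactly as in the earlier lemma. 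Once these points are in place, the contradiction is obtained by the same counting argument as in the $\Lm_K$ case, and the proof is complete.

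\begin{proof}
Suppose such an $f$ exists. Note $\mathrm{Im}\,\Pi=\{(x,y)\in F^2\mid x\neq y\}$, since for $x\neq y$ one may take any $z$ with $\ord z=\ord(y-x)$. By Corollary \ref{corol:PlusR-functions}, after a finite partition of $\mathrm{Im}\,\Pi$ into $(\PlusR,K)$-cells the third coordinate $f_3$ of $f$ lies in $\mathrm{Poly}_{\PlusR,K}$ on each piece. Applying Lemma \ref{lemma:Rplus-functionpartition} successively in the variables $x$ and $y$ (partitioning into cells between the two applications), we may refine the partition so that on each resulting cell $f_3(x,y)$ has one of the forms
\[
c,\qquad ax,\qquad by,\qquad ax\boxplus_{\gamma_1}(by\boxplus_{\gamma_2}c),
\]
with $a,b\in K$, $c\in F$, and $\gamma_1,\gamma_2\in\Gamma_K$; the constants depend on the cell. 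Call the cells of the last type $C_i$ and the others $D_j$. By the calculation rule $a(b\boxplus_\gamma c)=ab\boxplus_{\gamma+\ord a}ac$ and the definition of $\boxplus_\gamma$, on $C_i$ the value $f_3(x,y)$ can be nonzero only when $\ord(a_ix)\geqslant\gamma_{i,1}$ and $\ord(b_iy)\geqslant\gamma_{i,2}$; in particular such cells impose a lower bound on $\ord x$. Put
\[
\gamma:=\min_i\{\gamma_{i,1}-\ord a_i\}\mathand M:=\max\bigl(\{\ord c: f_3=c\text{ on some }D_j\}\cup\{\ord a_j, \ord b_j: f_3=a_jx\text{ or }b_jy\text{ on some }D_j\}\bigr),
\]
both finite since the partition is finite.

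Choose $N\in\Gamma_K$ with $N<\gamma$ and $N<0$, and $k\in\Gamma_K$ with $k>M-N$ and $k>0$. Since $\Gamma_K\equiv\Z$, pick $x,y\in F$ with $\ord x=\ord y=N$ and $\ord(y-x)=N+k$; then $x\neq y$ and $(x,y)\in\mathrm{Im}\,\Pi$, so $(x,y)$ lies in one of the cells. It cannot lie in a cell $C_i$, since $\ord x=N<\gamma\leqslant\gamma_{i,1}-\ord a_i$, whence $f_3(x,y)=0\neq y-x$ and $\ord f_3(x,y)\neq\ord(y-x)$. If $(x,y)$ lies in a cell $D_j$ on which $f_3=c$, then $\ord(y-x)=\ord c\leqslant M<N+k=\ord(y-x)$, a contradiction. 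If $f_3=a_jx$ on $D_j$, then $\ord f_3(x,y)=\ord a_j+N\leqslant M+N<N+k=\ord(y-x)$, again a contradiction; the case $f_3=b_jy$ is identical. Thus $(x,y)$ lies in no cell, which is absurd. Hence no such $f$ exists.
\end{proof}
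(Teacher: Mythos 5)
Your proof is correct and follows essentially the same route as the paper: apply Corollary \ref{corol:PlusR-functions} (plus the normal form of Lemma \ref{lemma:Rplus-functionpartition}) to reduce the third coordinate to a few simple shapes, note that the bounded-sum cells force the value $0$ once $\ord x$ is sufficiently negative, and then rule out the remaining constant/monomial cells because $\ord(y-x)$ can exceed $\ord x=\ord y$ by an arbitrarily large amount. You actually spell out, with the explicit choice of $N$ and $k$, the final step that the paper dismisses as ``easy to see,'' so your version is if anything more complete (modulo the degenerate case $a_i=0$ on a bounded-sum cell, which is handled the same way using $\ord y$).
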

    \begin{proof}
    Put $A = \{(x,y,z) \in F^3 \ | \ \ord z = \ord(y-x) \}$.
    Suppose $f: \Pi(A) \to A$ is a definable function for which $\Pi \circ f = \mathrm{Id}_{\mathrm{Im}_{\Pi}}$. By Corollary \ref{corol:PlusR-functions}, there exists a partition of $\Pi(A)$ in cells $C_i$ and $D_i$ such that
    \[f_{|C_i}(x,y) = (x,y, a_ix \boxplus_{\gamma_i} h_i(y)),\]
    and 
     \[f_{|D_i}(x,y) = (x,y, a_ix ), \mathor f_{|D_i}(x,y) = (x,y, h_i(y)) ,\]
    with $a_i \in K, \gamma_i \in \Gamma_K$ and $h_i \in \mathrm{Poly}_{\PlusR,K}$.
    Note that $f_{|C_i}(x,y)= (x,y,0)$ for elements $(x,y)$ for which $\ord a_ix < \gamma_i$. So the sets $D_i$ must contain all $(x,y)$ for which $\ord x$ is too small. However, it is easy to see that the functions $f_{|D_i}$ do not satisfy our requirements.
    \end{proof}
    
    This is a confirmation of our conjecture that a structure where addition is not definable does not have definable Skolem functions. Take for example the structure $(F, \Lm_{\PlusR,F})$. If we fix any constant $\gamma \in \Gamma_F$, we can define addition for the set $\{(x,y) \in F^2 \ | \min\{\ord x, \ord y\} \geqslant \gamma\}$. Taking smaller and smaller values for $\gamma$, we can thus define addition on very large open subsets of $F^2$, but still not large enough to enable us to define Skolem functions.
    
    \subsection*{Acknowledgements}
The results presented in this paper were obtained as part of my PhD thesis. I would like to thank my supervisor, Raf Cluckers, for many stimulating conversations about this topic, and other members of the jury (in particular, Jan Denef, Angus Macintyre and Leonard Lipshitz) for useful comments. Many thanks also to the Math Department of K.U.Leuven, for providing financial support and a very stimulating working environment.
    
  \bibliographystyle{plain}

\bibliography{/Users/iblueberry/Documents/Bibliografie} 
\end{document}